\pgfplotsset{compat=1.5}
\newcommand{\R}{\mathbb{R}}
\newcommand{\N}{\mathbb{N}}
\newcommand{\ds}{\displaystyle}
\newcommand{\Prob}{\mathcal{P}}
\newcommand{\E}{\mathbb{E}}
\begin{document}

\title{Generative Adversarial Networks: Dynamics}

\author{\name Matias G. Delgadino \email matias.delgadino@math.utexas.edu \\
       \addr Department of Mathematics\\
       University of Texas at Austin\\
       Austin, TX 78712, USA
       \AND
       \name Bruno B. Suassuna \email bruno.b.suassuna@mat.puc-rio.br \\
       \addr Departamento de Matematica\\
       Pontif\'icia Universidade Cat\'olica do Rio de Janeiro\\
       Rio De Janeiro, RJ 22451-900, Brazil
       \AND
       \name Rene Cabrera \email rene.cabrera@math.utexas.edu \\
       \addr Department of Mathematics\\
       University of Texas at Austin\\
       Austin, TX 78712, USA}


\maketitle

\begin{abstract}%
We study quantitatively the overparametrization limit of the original Wasserstein-GAN algorithm. Effectively, we show that the algorithm is a stochastic discretization of a system of continuity equations for the parameter distributions of the generator and discriminator. We show that parameter clipping to satisfy the Lipschitz condition in the algorithm induces a discontinuous vector field in the mean field dynamics, which gives rise to blow-up in finite time of the mean field dynamics. We look into a specific toy example that shows that all solutions to the mean field equations converge in the long time limit to time periodic solutions, this helps explain the failure to converge.
\end{abstract}

\begin{keywords}
  GAN, Aggregation Equation, blow-up
\end{keywords}

\vspace{0.3cm}

\section{Introduction}

\noindent Generative algorithms are at the forefront of the machine learning revolution we are currently experiencing. Some of the most famous types are diffusion models \cite{sohl2015deep}, generative language models \cite{radford2018improving} and Generative Adversarial Networks (GAN) \cite{goodfellow2014generative}. GAN was one of the first algorithms to successfully produce synthetically realistic images and audio and is the topic of this article.

A guiding assumption for GAN is that the support of the distribution can be well approximated by a lower dimensional object. That is to say although $P_*\in\Prob(\R^{K})$, we expect that the inherent correlations in data, like values of neighboring pixels in an image, drastically reduce the dimensionality of the problem. In broad terms, we expect that, in some non-specified sense, the effective dimension of the support of $P_*$ is less or equal than a latent dimension $L\ll K$. The GAN algorithm tries to find an easy way to evaluate a continuous function from $G:\R^L\to\R^K$, which we call the generator. The objective is to make $G(Z)$ to be approximately distributed like $P_*$, where $Z$ is distributed like the standard Gaussian $\mathcal{N}(0,1)\in\Prob(\R^L)$. To get an idea of orders of magnitude, \cite{karras2017progressive} creates realistic looking high resolution images  of faces with $K=1024\times 1024\times 3=3145728$ and $L=512$.

As the word adversarial in its name suggests, the algorithm pits two Neural Networks against each other, the generator network $G$ and the discriminator network $D$. The discriminator network tries to discern from the synthetic samples $G(Z)$ and the real samples $X\sim P_*$. For this purpose, the optimization over the discriminator network $D$ is the dual formulation of a metric between the associated synthetic data distribution $G\#\mathcal{N}$ and the real data distribution $P_*$. The original algorithm  \cite{goodfellow2014generative} used Jensen-Shannon divergence. The version we analyze in detail here is the Wasserstein-GAN (WGAN) \cite{arjovsky2017wasserstein} which uses the 1-Wasserstein distance instead. The behavior of GAN is known to be directly tied to the choice of the metric, see Section~\ref{sec:modecollapse} for more details.

The architecture of the Neural Networks (NN) which parametrize the generator and discriminator also plays a large role in the success of the algorithms. The paradigm for architectures at the time of the first prototypes of GANs was to use Convolutional Neural Networks (CNNs) which exploit the natural spatial correlations of pixels, see for example AlexNet introduced in \cite{krizhevsky2017imagenet}. Currently, the paradigm has changed with the advent of attention networks which are more parallelizable and outperform CNNs in most benchmarks, see \cite{vaswani2017attention}. In this paper, we forego the interesting question of the role of NN architecture to understand in more detail the induced dynamics, see Section~\ref{sec:NN} for more details. 

To understand the dynamics, we will follow the success of understanding the overparametrized limit in the supervised learning problem for shallow one hidden layer NN architectures \cite{mei2018mean,chizat2018global,rotskoff2022trainability}, see also \cite{fernandez2022continuous,9321497} for reviews of these results. In a nutshell, to the first order these articles relate Stochastic Gradient Descent (SGD) parameter training to a stochastic discretization of an associated aggregation equation \cite{bertozzi2011lp,carrillo2011global}, and to a second order to an aggregation diffusion equation \cite{carrillo2006contractions}. In probabilistic terms, this is akin to the law of large numbers \cite{sirignano2020mean} and the central limit theorem \cite{sirignano2020mean1}.

Our contribution, which is novel even in the supervised learning case, is to quantify this type of analysis. First, we show a quantitative result for the stability of the limiting aggregation equation in the 2-Wasserstein metric, see \cref{thm:1}. The difficulty of the stability in our case is not the regularity of the activation function \cite{chizat2018global}, but instead the growth of the Lipschitz constant with respect to the size of the parameters themselves. Next, we show a quantitative convergence of the empirical process to the solution to the mean field PDE, to our knowledge this is the first of its kind in terms of a strong metric like the 2-Wasserstein metric, see \cref{Approximation of parameters} and Corollary~\ref{corollary}.

Moreover, the WGAN algorithm clips the discriminator parameters after every training iteration. In the follow up work \cite{gulrajani2017improved} observed numerically that it created undesirable behavior. In terms of the mean field PDE \eqref{eq:dynamics}, the clipping of parameters induces an associated discontinuous vector field. This explains from a mathematical viewpoint the pathology mentioned before. In a nutshell, the parameter distribution can blow-up in finite time, and after that time the discriminator network loses the universal approximation capabilities, see Secion~\ref{sec:dynamics}.

Failure to converge is a known problem of GAN. For instance, \cite{karras2017progressive} introduces a progressive approach to training higher and higher resolution pictures, effectively having hot start of the algorithm at every step. By looking at an enlightening simplified example, we can explicitly understand the long time behavior of the algorithm. In this example, any initialization eventually settles to a time periodic orbit, which implies that the generator oscillates forever, see Section \ref{sec:modecollapse}.

\subsection{Outline of the paper}

The rest of the paper is organized as follows. Section~\ref{sec:mainresult} contains the notation and the main results: the well posedness of the mean field PDE system \eqref{eq:dynamics} Theorem~\ref{thm:1}, and the quantified mean field convergence Theorem~\ref{Approximation of parameters}. Section~\ref{sec:modecollapse} contains an enlightening example of the dynamics of WGAN. Section~\ref{sec:meanfield} contains the proof of Theorem~\ref{thm:1}. Section~\ref{Section Proof of Main Result} contains the proof of Theorem~\ref{Approximation of parameters}. Section~\ref{sec:conclusion} presents the conclusions and discusses some future
directions for research. Appendix~\ref{app} recalls well posedness and approximation of differential inclusions.

\section{Set up and main results}\label{sec:mainresult}
We consider a cloud of data points $\{x_i\}_{i\in I}\subset \R^{K}$, which we assume to be generated by an underlying probability measure $P_*\in\Prob(\R^{K})$. Although we do not have direct access to $P_*$, we assume the cloud of data is large enough so that we can readily sample $x_i\sim P_*$ without any inherent bias. 

The task is to generate approximate samples of the distribution $P_*$ from a base underlying probability measure which is easy to sample. We consider the Gaussian distribution $\mathcal{N}(0,1)\in\Prob(\R^L)$ in a latent space $\R^L$, where $L$ is the dimension of the latent space, which is to be chosen by the user. We will try to approximate $P_*$ by the push forward of said base distribution $G_\Theta\#\mathcal{N}$, where $G_\Theta:\R^L\to\R^K$ is a parametric function, which is usually chosen to be a Neural Network.

To choose the parameters $\Theta$, whose dimensionality we will set later, we consider the following optimization problem
$$
\inf_{\Theta} d_1(G_\Theta\#\mathcal{N},P_*),
$$
where $d_1$ is the 1-Wasserstein distance. Although this problem seems rather straight forward, the Wasserstein distance is notorious for being difficult to calculate in high dimensions, and we do not have direct access to $P_*$; hence, in practice a proxy of said distance is chosen. More specifically, we approximate the dual problem
\begin{equation}\label{eq:kant}
d_1(G_\Theta\#\mathcal{N},P_*)=\sup_{D\in \text{Lip}_1} \int_{\R^L} D( G_\Theta(z)) \;d\mathcal{N}(z)-\int_{\R^K} D(x)\;dP_*(x),   
\end{equation}
by replacing the $\text{Lip}_1$ class of functions by the parametric function $D_\Omega:\R^K\to \R$, 
$$
d_1(G_\Theta\#\mathcal{N},P_*)\sim \sup_\Omega\int_{\R^L} D_\Omega( G_\Theta(z)) \;d\mathcal{N}(z)-\int_{\R^K} D_\Omega(x)\;dP_*(x).
$$

The parametric function $D_\Omega$ will also be considered as a Neural Network and the parameters $\Omega$ are restricted to a compact convex set. The precise definition of $G_\Theta$ and $D_\Omega$ as Neural Networks with a single hidden layer is given bellow, letting $\sigma:\R\to\R$ denote the activation function. Since the parameters $\Omega$ are restricted to a compact set, if $\sigma$ is $C^1$ bounded the family $\{D_\Omega\}$ is uniformly Lipschitz.
\begin{remark}\label{rem:GAN}
    The original GAN \cite{goodfellow2014generative} utilizes the Jensen-Shannon divergence, which in terms of Legendre-Fenchel dual can be written as
    $$
        \mathbb{JS}(G_\Theta\#\mathcal{N},P_*)=\sup_{D\in C_b(\R^K)}\int_{\R^L}\log D(G_\Theta(z))\;d\mathcal{N}(z)+\int_{\R^K}\log(1-D(x))\;dP_*(x).
    $$
\end{remark}

\subsection{Neural Networks}\label{sec:NN}
For both the generator $G_\Theta$ and discriminator $D_\Omega$, we consider the simple case of a single hidden layer, which has the universal approximation property, see \cite{cybenko1989approximation}. That is to say
$$
G_\Theta(z)=\left(\frac{1}{N}\sum_{i=1}^N \alpha^1_i\sigma(\beta^1_i\cdot z+\gamma^1_i),\;...\;,\frac{1}{N}\sum_{i=1}^N \alpha^K_i\sigma(\beta^K_i\cdot z+\gamma^K_i)\right),
$$
where the array $\Theta=(\theta_1,\cdots,\theta_N)\in((\R\times\R^L\times\R)^K)^N$ is given by $\theta_i=(\alpha^j_i,\beta^j_i,\gamma^j_i)_{1\leq j\leq K}$, and $D_\Omega$ defined by
$$
D_\Omega(x)=\frac{1}{M}\sum_{i=1}^M a_i\sigma(b_i
\cdot x+c_i),
$$
where the array $\Omega=(\omega_1,\cdots,\omega_M)\in (\R\times\R^K\times\R)^M$ is given by $\omega_i=(a_i,b_i,c_i)$. To obtain rigorous quantitative estimates, throughout the paper we consider activation functions $\sigma:\R\to\R$ that are bounded in $C^2(\R)$. The typical example being the sigmoid function
$$
\sigma(u)=\frac{1}{1+e^{-u}}.
$$
Simplifying notation, we denote
$$
\alpha^j \sigma(\beta^j \cdot z+\gamma^j)=\sigma(z;\theta^j)\qquad\mbox{with}\qquad\theta^j=(\alpha^j,\beta^j,\gamma^j)\in\R\times\R^L\times\R, \qquad 1\leq j\leq K,
$$
and
$$
a \sigma(b \cdot x+c)=\sigma(x;\omega)\qquad\mbox{with}\qquad\omega=(a,b,c)\in\R\times\R^K\times\R .
$$

\begin{remark}
    The mean field analysis of two hidden layers NN is also possible, see for instance \cite{sirignano2022mean}.
\end{remark}

\subsection{Training the parameters by SGD}
We follow a simplified version of parameter training algorithm which is given in the original reference \cite{arjovsky2017wasserstein}, the only difference is that for comprehensibility we consider stochastic gradient descent instead of RMSProp (\cite{tieleman2012lecture}), see Remark~\ref{rem:RMSProp}. We use $n$ as the full step indexing, and $l$ for the sub-index related to the extra training for the Discriminator's parameters. We initialize the parameters chaotically:
$$
\Omega^{1,1}\sim \nu_{in}^{\otimes M} \qquad\mbox{and}\qquad \Theta^1\sim \mu_{in}^{\otimes N},
$$
where 
$$
\Omega^{1,1}\in (\R\times\R^K\times\R)^M\qquad\mbox{and}\qquad \Theta^1\in ((\R\times\R^L\times\R)^K)^N,
$$
and the initial distributions 
$$
\nu_{in}\in\mathcal{P}\left(\R\times\R^K\times\R\right)\qquad\mbox{and}\qquad \mu_{in}\in \mathcal{P}\left((\R\times\R^L\times\R)^K\right)
$$
are fixed independent of $N$ and $M$. Of course, correlations in parameter initialization and $N$ and $M$ dependent initial conditions can be introduced if they were desirable.

Iteratively in $n$ until convergence, and iteratively for $l=2,...,n_c$ with $n_c$ a user defined parameter, we define
$$
\Omega^{n,l}=\mathrm{clip}\left(\Omega^{n,l-1}+h\nabla_\Omega\left(D_{\Omega^{n,l-1}}(G_{\Theta^{n}}(z^n_l))-D_{\Omega^{n,l-1}}(x^n_l)\right)\right),
$$
$$
\Omega^{n+1,1}=\Omega^{n,n_c},
$$
and
$$
\Theta^{n+1}=\Theta^{n}-h\nabla_\Theta D_{\Omega^{n+1,1}}(G_{\Theta^{n}}(z^n_{n_c+1})),
$$
where the function clip stands for the projection onto $[-1,1]\times[-1,1]^K\times[-1,1]$, and $h>0$ is the learning rate which is a user chosen parameter.
The families $\{x_l^n\}_{n\in\N,l\in\{1,...,n_c\}}$, $\{z^n_l\}_{n\in\N,l\in\{1,...,n_c+1\}}$ are independent $\R^K$ and $\R^L$ valued random variables distributed by $P_*$ and $\mathcal{N}$, respectively. 
\begin{remark}
    The clipping of the parameter is made to ensure that the discriminator network is uniformly bounded in the Lipschitz norm, to approximate Kantorovich's duality formulation \eqref{eq:kant}. With this in mind, we should notice that the clipping of all parameter is slightly indiscriminate. For instance, the dependence of the discriminator function with respect to the parameter $a$ is bounded by our assumption on the activation function $\sigma$, and would not need to be clipped.
\end{remark}

\begin{remark}\label{rem:RMSProp}
    We should note that other versions of SGD like Adam or RMSProp (see \cite{kingma2014adam} and \cite{tieleman2012lecture}) are preferred by users as they are considered to outperform SGD. They introduce adaptive time stepping and momentum in an effort to avoid metastability of plateaus, and falling into shallow local minima. These tweaks of SGD add another layer of complexity which we will not analyze in this paper.
\end{remark}

\subsection{Associated Measures}
Associated to each family of parameters at the iteration step $n$ we consider the empirical measures,
\begin{eqnarray*}
    \mu_N^n&=&\frac{1}{N}\sum_{i=1}^N\delta_{\Theta^n_i}\in \mathcal{P}\left((\R\times\R^L\times\R)^K\right)\\
\nu_M^n&=&\frac{1}{M}\sum_{i=1}^M\delta_{\Omega^{n,1}_i}\in \mathcal{P}\left(\R\times\R^K\times\R\right).
\end{eqnarray*}

Abusing notation slightly and for general probability measures $\mu\in \mathcal{P}\left((\R\times\R^L\times\R)^K\right)$ and $\nu\in\mathcal{P}\left(\R\times\R^K\times\R\right)$, define 
\begin{equation}\label{eq:generator}
G_\mu(z)=\left(\int_{\R\times\R^L\times\R}\sigma(z;\theta_1)\;d\mu_1(\theta_1),\;...\;,\int_{\R\times\R^L\times\R}\sigma(z;\theta_K)\;d\mu_K(\theta_K)\right)    
\end{equation}
and
\begin{equation}\label{eq:discriminator}
D_\nu(x)=\int_{\R\times\R^K\times\R}\sigma(x;\omega)d\nu(\omega),    
\end{equation}
where $\mu_i$, for $i=1,...,K$, denotes the $i$-th marginal of $\mu$. We should note that due to the exchangeability of the parameters, there is no loss of information from considering the pair $(\Theta^n,\Omega^n)$ versus the pair $(\mu^n_N,\nu^n_M)$. In fact, using the previous notations we have
$$
G_{\Theta^n}=G_{\mu_N^n}\qquad\mbox{and}\qquad D_{\Omega^n}=D_{\nu_M^n}.
$$

Hence, to understand the behavior of the algorithm in the overparameterization limit, we will center our attention on the evolution of the empirical measures. More specifically, we consider the curves $\mu\in C\left([0,\infty);\mathcal{P}\left((\R\times\R^L\times\R)^K\right)\right)$ and $\nu\in C\left([0,\infty);\mathcal{P}\left(\R\times\R^K\times\R\right)\right)$ to be, respectively, the linear interpolation of $\mu_N^n$ and $\nu_M^n$ at the time values $t_n=n(h/N)$.

The choice of the scale $\Delta t=h/N$ is arbitrary, and could also be expressed in terms of $M$. The relationship between $N$, $M$ and $n_c$ gives rise to different mean field limits 
\begin{equation}\label{eq:speedup}
    n_c\frac{N}{M}\to\gamma_c\sim \begin{cases}+\infty\\
1\\
0,
\end{cases}
\end{equation}
and we will obtain different behavior in terms of limiting dynamics. In this paper, we address the intermediate limit $\gamma_c\sim 1$, but we should notice that in practice it is also interesting to study when $\gamma_c= \infty$, which assumes that the discriminator has been trained to convergence, see \cref{sec:modecollapse} for an illustrative example. For notational simplicity, we write the proof for $N=M$ and $n_c=1$, but our methods are valid for any finite value of $\gamma_c\sim 1$.

Explicitly, for any $t\in[0,\infty)$, we find $n\in\N$ and $s\in[0,1)$ such that 
$$
(1-s)t_n+s t_{n+1}=t
$$
and set the intermediate value as the 2-Wasserstein geodesics:
\begin{equation}\label{eq:continterp}
\mu_N(t)=\frac{1}{N}\sum_{i=1}^N\delta_{(1-s)\theta^n_i+s\theta^{n+1}_i}\qquad\mbox{and}\qquad\nu_N(t)=\frac{1}{N}\sum_{i=1}^N\delta_{(1-s)\omega^{n,1}_i+s\omega^{n+1,1}_i}.    
\end{equation}

\subsection{Identifying the limit}\label{sec:dynamics}

For a given pair of measures $\mu$ and $\nu$, consider the energy functional:
\begin{equation}
    \label{Energy Functional}
    E[\mu,\nu]= \int_{\R^L} D_\nu( G_\mu(z)) \;d\mathcal{N}(z)-\int_{\R^K} D_\nu(x)\;dP_*(x).
\end{equation}

The evolution of the limit can be characterized by the gradient descent of $E$ on $\mu$ and gradient ascent on $\nu$, the latter restricted to $\mathcal{P}([-1,1]\times[-1,1]^K\times[-1,1])$. In terms of equations we consider
\begin{equation}\label{eq:dynamics}
\begin{cases}
    \partial_t \mu-\nabla_\theta\cdot\left(\mu\; \nabla_\theta \frac{\delta E}{\delta \mu}[\mu,\nu]\right)=0,\\
    \partial_t \nu+ \gamma_c \nabla_\omega\cdot\left(\nu\; \mbox{Proj}_{\pi_{Q}}\nabla_\omega \frac{\delta E}{\delta \nu}[\mu,\nu]\right)=0,\\
    \mu(0)=\mu_{in}, \qquad \nu(0)=\nu_{in},
\end{cases}
\end{equation}
where we define $Q=[-1,1]\times[-1,1]^K\times[-1,1]$ and the first variations are
\begin{align*}
\frac{\delta E}{\delta \mu}[\mu,\nu](\theta)&=\int_{\R^L} \int_{Q}\nabla_1\sigma(G_\mu(z);\omega)\cdot \left(\sigma(z;\theta_1),\,...\,,\sigma(z;\theta_K)\right) \,d\nu(\omega)\,d\mathcal{N}(z),\\
\frac{\delta E}{\delta \nu}[\mu,\nu](\omega)&=\int_{\R^L}\sigma(G_\mu(z);\omega)\; d\mathcal{N}(z) - \int_{\R^K}\sigma(x;\omega)\; dP_*(x)
\end{align*}
and $\mbox{Proj}_{\pi_{Q}}:Q\times(\R\times\R^K\times\R)\to \R\times\R^K\times\R$ is the projection onto the tangent cone $\pi_{Q}(\omega)$. In the present case the projection can be defined by components as follows:
\begin{equation}\label{def:proj}
\mbox{Proj}_{\pi_{Q}}(\omega,V)_l=\begin{cases}
    V_l, & \omega_l\in(-1,1)\\
    V_l \frac{1-\mathrm{sign}(V_l\omega_l)}{2}, & \omega_l\in\{-1,1\}
\end{cases}   . 
\end{equation}
We should notice in fact that the projection is trivial away from the boundary, or if the vector field at the boundary points into the domain. Effectively, the projection does not allow for mass to exit the domain. We do note that this can easily make mass collapse onto the boundary and flatten the support of the distribution $\nu$ into less dimensions, see Section~\ref{sec:modecollapse} for a further discussion.

In the context of ODEs, the projection onto convex sets was considered by \cite{henry1973existence}, which we recall and expand on \cref{app}. For Hilbert spaces setting, we mention the more general sweeping processes introduced by Mureau \cite{moreau1977evolution}. Recently, projections of solutions to the continuity equation onto semi-convex subsets have been considered as models of pedestrian dynamics with density constraints, see for instance \cite{di2016measure,santambrogio2018crowd,de2016bv}.

\subsection{Main Result}
We start by showing that the mean field parameter dynamics with a discontinuous vector fields are well defined and stable. We quantify all the results with respect to the Wasserstein distance, with $d_2$ and $d_4$ representing the standard $2$-Wasserstein and $4$-Wasserstein distance, respectively.
\begin{theorem}\label{thm:1}
Given initial conditions $(\mu_{in},\nu_{in})\in \mathcal{P}\left((\R\times\R^L\times\R)^K\right)\times \mathcal{P}(Q)$ such that for some $\delta>0$
\begin{equation}\label{eq:expmoment}
\int e^{\delta |\alpha|^2}\;d\mu_{in}<\infty,    
\end{equation}
there exists a unique absolutely continuous weak solution  to the mean field system \eqref{eq:dynamics}.

Moreover, we have the following stability estimate: For any $T\in[0,\infty)$, there exists $C>1$ such that
\begin{equation}
\sup_{t\in[0,T]}d_2((\mu_1(t),\nu_1(t)),(\mu_2(t),\nu_2(t)))\le C d_4^2( (\mu_{1,in},\nu_{1,in}) , (\mu_{2,in},\nu_{2,in}) ),
\end{equation}
for any pair of weak solutions $(\mu_1,\nu_1)$ and $(\mu_2,\nu_2)$. 
\end{theorem}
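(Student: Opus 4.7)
The plan is to construct solutions via the method of characteristics combined with a Banach fixed-point argument, treating the two equations separately. The $\mu$-equation is driven by the vector field $V_\mu[\mu,\nu] := -\nabla_\theta\frac{\delta E}{\delta\mu}[\mu,\nu]$, which is $C^1$ in $\theta$ thanks to the $C^2$-boundedness of $\sigma$ and the compactness of $Q$, so classical Cauchy--Lipschitz applies; the $\nu$-equation is driven by the projected field $V_\nu[\mu,\nu] := \mbox{Proj}_{\pi_Q}\nabla_\omega\frac{\delta E}{\delta\nu}[\mu,\nu]$, which is discontinuous on $\partial Q$ and which I would interpret as a differential inclusion using the Henry-type projected dynamics recalled in Appendix~\ref{app}. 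A structural observation is that the $\alpha$-component of $V_\mu$ is bounded, while its $\beta,\gamma$-components grow only linearly in $|\alpha|$ with Gaussian-integrable $z$-weights, so along characteristics $|\alpha(t)| \leq |\alpha(0)| + Ct$ and the exponential moment \eqref{eq:expmoment} propagates uniformly on $[0,T]$.

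Existence and uniqueness are then obtained by a fixed-point iteration on $C([0,T_0];d_2)$: given a candidate $(\mu,\nu)$, form the classical flow $\Phi^\mu_t$ and the set-valued projected flow $\Phi^\nu_t$ and iterate $(\mu,\nu)\mapsto (\Phi^\mu_t\#\mu_{in},\Phi^\nu_t\#\nu_{in})$. Lipschitz dependence of the two vector fields on the pair of measures in $d_2$ is direct from integrating the smooth bounded kernels $\sigma(G_\mu(z);\omega)$ and $\sigma(x;\omega)$; combined with a Grönwall estimate weighted by $1+|\alpha|$, this yields a contraction on a sufficiently small interval $[0,T_0]$, and iteration extends the solution to arbitrary $[0,T]$.

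For the stability estimate, I would couple two solutions optimally at time zero under a $d_4$-plan and track the paired characteristics $(\Theta_i(t),\Omega_i(t))$. Convexity of $Q$ gives a one-sided Lipschitz (monotonicity) bound on $V_\nu$, and $V_\mu$ is $(1+|\alpha|)$-Lipschitz in $\theta$, so the squared pointwise displacement obeys
\begin{equation*}
\frac{d}{dt}\bigl(|\Theta_1-\Theta_2|^2+|\Omega_1-\Omega_2|^2\bigr) \leq C(1+|\alpha_1|+|\alpha_2|)^2\bigl(|\Theta_1-\Theta_2|^2+|\Omega_1-\Omega_2|^2\bigr) + C\,d_2^2((\mu_1,\nu_1),(\mu_2,\nu_2)).
\end{equation*}
Taking expectation under the coupling and applying Cauchy--Schwarz to split $\E[(1+|\alpha|)^2 X^2] \leq \sqrt{\E(1+|\alpha|)^4}\,\sqrt{\E X^4}$ converts one factor of displacement into a $d_4$-quantity; the propagated exponential moment controls $\E(1+|\alpha|)^4$ uniformly in time, and closing a Grönwall loop in the 4-Wasserstein then produces the quadratic dependence on $d_4$ of the initial data asserted in the statement.

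The main obstacle is the interaction of two independently delicate features. First, $V_\nu$ is genuinely discontinuous on $\partial Q$, which kills classical Cauchy--Lipschitz and forces the differential-inclusion reading with a one-sided Lipschitz bound in place of a two-sided one. Second, $V_\mu$ has Lipschitz constant growing like $1+|\alpha|$, not uniformly bounded; this is precisely why the stability estimate needs the Cauchy--Schwarz trick above, why the power of $d_4$ is squared rather than linear, and why the exponential moment hypothesis \eqref{eq:expmoment} cannot be dispensed with.
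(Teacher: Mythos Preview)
Your stability argument captures the paper's key ideas: coupling via characteristics, the one-sided Lipschitz bound from convexity of $Q$ to handle the projection, the linear growth $|\alpha(t)|\le|\alpha(0)|+Ct$ from boundedness of the $\alpha$-component of $V_\mu$, and Cauchy--Schwarz to separate the $|\alpha|$-dependent factor from the displacement. The order of operations differs, though, and your ``closing a Gr\"onwall loop in the $4$-Wasserstein'' is not quite how the paper proceeds. The paper first runs a \emph{pointwise} Gr\"onwall on the characteristic displacement $|\Theta_1-\Theta_2|^2+|\Omega_1-\Omega_2|^2$, which produces a multiplicative factor $e^{C(1+|\alpha_1|+|\alpha_2|)t}$ and an additive forcing $\int_0^t d_2^2$; only then does it integrate over the optimal $d_4$-coupling, apply Cauchy--Schwarz to split $e^{C|\alpha|t}|X(0)|^2$ into $(\int e^{2C|\alpha|t})^{1/2}\,d_4^2$, and finally run a second Gr\"onwall in the scalar quantity $d_2^2(t)$. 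Your version---taking expectation first and then trying to close---forces you to track $\E X^4$ along the flow, which cascades into higher moments; the paper's two-step Gr\"onwall avoids this.

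For existence you take a genuinely different route. The paper does \emph{not} run a Banach fixed point on path space; instead it first observes that for \emph{discrete} initial data the mean-field system collapses to a finite-dimensional projected ODE, to which the Henry-type theory of Appendix~\ref{app} applies directly (no time-dependent extension needed, since the empirical measure is the state). It then approximates general $(\mu_{in},\nu_{in})$ by discrete measures with uniformly controlled exponential moments, and uses the already-proved stability estimate to show the resulting solutions form a Cauchy sequence in $d_2$. Your fixed-point scheme is viable---with identical initial data the contraction closes in $d_2$ without needing $d_4$---but it requires extending the projected-ODE theory to time-dependent vector fields and carving out a moment-invariant ball in path space, whereas the paper's approximation argument reuses the stability proposition wholesale.
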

The proof of \cref{thm:1} is given in \cref{sec:meanfield}, see Proposition~\ref{Stability PDE} for a precise dependence of the constants. Our main result is the following estimate on the continuous time approximation of parameter dynamics.
\begin{theorem}
\label{Approximation of parameters}
Let $(\mu_N(t),\nu_N(t))$ be the empirical measures associated to the continuous time interpolation of the parameter values, assumed to be initialized by independent samples from $(\mu_{\mathrm{in}},\nu_{in})$ given by \eqref{eq:continterp}. Consider $(\hat{\mu}_N(t),\hat{\nu}_N(t))$ the unique solution to the PDE \eqref{eq:dynamics} with random initial conditions $(\mu_N(0),\nu_N(0))$.  If $\mu_{in}$ has bounded double exponential moments on $\alpha$, that is to say for some $\delta>0$
\begin{equation}\label{eq:expmoments}
\E_{\mu_{in}}\left[e^{e^{\delta |\alpha|^{2}}}\right]<\infty, 
\end{equation}
then for any fixed time horizon $T\in[0,\infty)$ there exists $C>0$ such that
\begin{equation}\label{eq:nocurseofdimensionality}
\sup_{t\in[0,T]}\mathbb{E}d_2^2((\mu_N(t),\nu_N(t)),(\hat{\mu}_N(t),\hat{\nu}_N(t)))\leq \frac{C}{N}.    
\end{equation}
\end{theorem}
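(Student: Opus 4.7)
The plan is to couple the SGD trajectories $(\Theta^n,\Omega^{n,1})$ to the characteristics of the mean field system \eqref{eq:dynamics} driven by $(\hat{\mu}_N,\hat{\nu}_N)$. Let $(\hat{\theta}_i(t),\hat{\omega}_i(t))$ solve the non-local ODE system with velocity fields $-\nabla_\theta\tfrac{\delta E}{\delta\mu}[\hat{\mu}_N(t),\hat{\nu}_N(t)]$ and $\gamma_c\,\mathrm{Proj}_{\pi_Q}\nabla_\omega\tfrac{\delta E}{\delta\nu}[\hat{\mu}_N(t),\hat{\nu}_N(t)]$, respectively, with initial data $\hat{\theta}_i(0)=\Theta^1_i$, $\hat{\omega}_i(0)=\Omega^{1,1}_i$. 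Existence of these characteristics, and the fact that $\hat{\mu}_N=\tfrac1N\sum_i\delta_{\hat{\theta}_i}$ and $\hat{\nu}_N=\tfrac1N\sum_i\delta_{\hat{\omega}_i}$ solve \eqref{eq:dynamics}, follow from Theorem~\ref{thm:1} combined with the differential inclusion machinery recalled in \cref{app}. Since the geodesic interpolation in \eqref{eq:continterp} preserves the empirical structure, the Wasserstein bound gives
\begin{equation*}
d_2^2(\mu_N(t),\hat{\mu}_N(t))+d_2^2(\nu_N(t),\hat{\nu}_N(t))\le \frac{1}{N}\sum_{i=1}^N\bigl(|\tilde{\theta}_i(t)-\hat{\theta}_i(t)|^2+|\tilde{\omega}_i(t)-\hat{\omega}_i(t)|^2\bigr),
\end{equation*}
where $\tilde{\theta}_i,\tilde{\omega}_i$ denote the piecewise-linear interpolation of the SGD iterates, reducing the problem to a particle-wise estimate.

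\textbf{One-step decomposition.} Writing $\Delta_i^n=\Theta_i^n-\hat{\theta}_i(t_n)$ and $\Xi_i^n=\Omega_i^{n,1}-\hat{\omega}_i(t_n)$, I would expand $\Delta_i^{n+1}-\Delta_i^n$ (and analogously for $\Xi$) into three pieces: a \emph{noise} piece, namely the stochastic gradient at $(\mu_N^n,\nu_N^{n,n_c})$ evaluated on the fresh independent samples $(z^n,x^n)$ minus its conditional expectation under $(\mathcal{N},P_*)$; a \emph{measure drift} piece, controlled by $d_2((\mu_N^n,\nu_N^{n,n_c}),(\hat{\mu}_N(s),\hat{\nu}_N(s)))$ for $s\in[t_n,t_{n+1}]$ via Lipschitz continuity of $\nabla\tfrac{\delta E}{\delta\mu}$, $\nabla\tfrac{\delta E}{\delta\nu}$ in their measure arguments; and a \emph{particle drift} piece, controlled by $|\Delta_i^n|$ and $|\Xi_i^n|$ via spatial Lipschitz continuity of the same vector fields. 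The projection onto $\pi_Q$ does not inflate the estimate for $\Xi$ because it is nonexpansive on the convex set $Q$, so the discontinuity of the vector field is absorbed at the level of the coupled ODEs.

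\textbf{Closing the Gronwall.} Squaring, summing over $i$, and using that the noise piece is a martingale increment orthogonal to the drift gives, for some $C=C(T)$,
\begin{equation*}
\frac{1}{N}\sum_i\E\bigl(|\Delta_i^{n+1}|^2+|\Xi_i^{n+1}|^2\bigr)\le \Bigl(1+\frac{Ch}{N}\Bigr)\frac{1}{N}\sum_i\E\bigl(|\Delta_i^n|^2+|\Xi_i^n|^2\bigr)+\frac{Ch^2}{N^2}.
\end{equation*}
The per-step variance $O(h^2/N^2)$ is the key source of the $1/N$ rate: the step is of size $h/N$ (because the chain rule through $G_\Theta=\tfrac1N\sum\sigma(\cdot;\theta_i)$ contributes a $1/N$), and summed over $NT/h$ iterations produces a total variance of order $hT/N$. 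Discrete Gronwall, together with zero initial error $\Delta_i^0=\Xi_i^0=0$ from the coupling, yields the gridpoint bound $O(1/N)$, which a standard interpolation estimate (adding $O((h/N)^2)$ per cell) promotes to the supremum over $t\in[0,T]$.

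\textbf{Main obstacle.} The genuine difficulty is making the constants $C$ in the Lipschitz estimates uniform in $t\in[0,T]$. The $\theta$-velocity field grows linearly in the amplitude coordinate $\alpha$ (through $\nabla_\beta\sigma(z;\theta)=\alpha\sigma'(\beta\cdot z+\gamma)z$ and similarly for $\gamma$), so on one hand $|\alpha(t)|$ may grow exponentially along the flow, and on the other hand the Lipschitz and drift constants depend on moments $\E\,e^{\delta|\alpha(t)|^2}$ of the evolving measure. Propagating a single-exponential moment uniformly on $[0,T]$ through a vector field whose Lipschitz constant is itself exponential in $|\alpha|$ forces the \emph{double}-exponential hypothesis \eqref{eq:expmoments}: one exponential is consumed by the time-horizon rescaling $|\alpha(t)|^2\le e^{Ct}|\alpha(0)|^2$, leaving a standard exponential moment that feeds the Gronwall machine. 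Carrying this self-consistent moment bound through the coupling, in tandem with the one-step variance estimate, is where the technical heart of the argument lies; the martingale and interpolation steps are then routine.
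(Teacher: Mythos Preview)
Your synchronous coupling, one-step decomposition into noise/measure-drift/particle-drift, martingale orthogonality, and discrete Gronwall plan matches the paper's proof closely. The paper works with the quantities $e_i^n=|\hat\theta_i^n-\theta_i^n|^2+|\hat\omega_i^n-\omega_i^n|^2$, expands one step, uses contractivity of $\mathrm{Proj}_Q$, and splits exactly as you describe.

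However, your ``Main obstacle'' paragraph misidentifies the mechanism forcing the double-exponential moment hypothesis, and your displayed recursion with a \emph{uniform} constant $C$ hides the real difficulty. Two points:

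\emph{Growth of $\alpha$.} The first coordinate of the $\theta$-velocity, $(V^\Theta_j)_1$, is uniformly bounded (it contains no $\alpha$ factor; see the explicit formula in Lemma~\ref{lem:auxV}), so $|\alpha(t)|\le|\alpha_{\rm in}|+Ct$ grows \emph{linearly}, not exponentially (this is Lemma~\ref{lem:growth}). Your rescaling $|\alpha(t)|^2\le e^{Ct}|\alpha(0)|^2$ is therefore not the source of anything.

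\emph{Where the double exponential actually comes from.} The Lipschitz constant of the $\theta$-velocity at particle $i$ is of order $K_i\sim 1+|\alpha_{i,\rm in}|$ (Lemma~\ref{lem:auxV}). This forces the paper to condition on $\{\alpha_{j,\rm in}\}$ and run a \emph{nested} discrete Gronwall: a first Gronwall on $\E[e_i^n\mid\{\alpha_{j,\rm in}\}]$ produces a factor $e^{TK_i}$; summing over $i$ and running a second Gronwall on $\E[e^n\mid\{\alpha_{j,\rm in}\}]$ produces a factor $e^{TK}$ with $K=\tfrac1N\sum_iK_ie^{TK_i}$. Taking the final expectation then requires $\E_{\mu_{\rm in}}\bigl[e^{Ce^{C|\alpha|}}\bigr]<\infty$, which is exactly \eqref{eq:expmoments}. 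Your single recursion $(1+Ch/N)\cdot(\,\cdot\,)+Ch^2/N^2$ with deterministic $C$ would iterate to $e^{CT}\cdot O(h/N)$ and would require \emph{no} moment assumption on $\alpha$ at all; that recursion is not valid here because $C$ is really $K_i$, random and particle-dependent. Replacing your last paragraph by the conditioning-plus-double-Gronwall argument closes the gap.
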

\begin{remark}
    The need for \eqref{eq:expmoments} stems from the linear dependence of the Lipschitz constant of the mean field vector field with respect to the size of the parameters, see Lemma~\ref{lem:auxV}.
\end{remark}

The proof of \cref{Approximation of parameters} is presented in \cref{Section Proof of Main Result}. Using the convergence \cref{Approximation of parameters} and the stability of the mean field \cref{thm:1}, we can obtain a convergence rate estimate which suffers the curse of dimensionality.
\begin{corollary}\label{corollary}
Under the hypotheses of \cref{thm:1} and \cref{Approximation of parameters}, for any fixed $T>0$, there exists $C>0$ such that
\begin{equation}\label{eq:curseofdim}
    \max_{t\in [0,T]}\E d^2_2((\mu(t),\nu(t)),(\mu_N(t),\nu_N(t)))\le \frac{C}{N^{\frac{2}{K(L+2)}}} 
\end{equation}
where $(\mu,\nu)$ is the unique solution of \eqref{eq:dynamics} and $(\mu_N,\nu_N)$ is the curve of interpolated empirical measures associated to the parameter training \eqref{eq:continterp}.
\end{corollary}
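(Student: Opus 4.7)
The plan is to insert the mean field trajectory $(\hat\mu_N, \hat\nu_N)$ from \cref{Approximation of parameters}---the solution of \eqref{eq:dynamics} launched from the random empirical initial condition $(\mu_N(0), \nu_N(0))$---between the particle system $(\mu_N, \nu_N)$ and the limiting solution $(\mu, \nu)$. Applying the triangle inequality in $d_2$ together with $(a+b)^2 \le 2a^2 + 2b^2$ yields
$$d_2^2((\mu_N(t), \nu_N(t)), (\mu(t), \nu(t))) \le 2\, d_2^2((\mu_N(t), \nu_N(t)), (\hat\mu_N(t), \hat\nu_N(t))) + 2\, d_2^2((\hat\mu_N(t), \hat\nu_N(t)), (\mu(t), \nu(t))).$$
The first summand, upon taking expectation and supremum over $t \in [0,T]$, is exactly what \cref{Approximation of parameters} bounds by $C/N$, which decays strictly faster than the advertised rate and is therefore harmless.

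For the second summand I invoke the stability estimate \cref{thm:1} applied to the pair of PDE solutions starting from $(\mu_N(0), \nu_N(0))$ (random) and $(\mu_{in}, \nu_{in})$. Squaring the stability inequality gives, pointwise in the random initial sample,
$$\sup_{t \in [0,T]} d_2^2((\hat\mu_N(t), \hat\nu_N(t)), (\mu(t), \nu(t))) \le C^2\, d_4^4((\mu_N(0), \nu_N(0)), (\mu_{in}, \nu_{in})).$$
This reduces the whole question to a purely probabilistic one: the convergence rate of the empirical measure of $N$ i.i.d. samples drawn from $(\mu_{in}, \nu_{in})$, measured in a 4-Wasserstein sense on the product space $(\R \times \R^L \times \R)^K \times (\R \times \R^K \times \R)$.

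To close, I apply the Fournier--Guillin rate for empirical measure convergence in Wasserstein distance. The $\nu_{in}$-marginal is supported in the compact set $Q$ of dimension $K+2$ and thus produces a strictly faster rate, while the $\mu_{in}$-marginal lives in $\R^{K(L+2)}$ and drives the curse-of-dimensionality exponent. The polynomial moments required by Fournier--Guillin are supplied by the double-exponential assumption \eqref{eq:expmoments} on the $\alpha$-coordinates together with compactness of $Q$ and standard moment bounds on the remaining coordinates of $\mu_{in}$, giving
$$\E\, d_4^4((\mu_N(0), \nu_N(0)), (\mu_{in}, \nu_{in})) \lesssim N^{-2/(K(L+2))}.$$
Combining with the $O(1/N)$ bound on the first summand, taking the maximum in $t$, and absorbing constants delivers the claim. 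The main obstacle is bookkeeping rather than conceptual: one must verify that the stability constant $C$ of \cref{thm:1}, which depends on moments of the initial data, remains controlled in expectation under the random empirical law $\mu_N(0)$, and that the Fournier--Guillin moment hypotheses transfer to the product space. Both are routine consequences of \eqref{eq:expmoments}, but need to be assembled carefully to close the estimate uniformly in $N$.
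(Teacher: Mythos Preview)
Your proposal is correct and follows the same route as the paper: split via the triangle inequality through $(\hat\mu_N,\hat\nu_N)$, control the first piece by \cref{Approximation of parameters}, and control the second by stability combined with the Fournier--Guillin rate for empirical measures. The only cosmetic difference is that the paper invokes stability in the form $d_2^2 \le C\, d_4^2$ (which is what the proof of Proposition~\ref{Stability PDE} actually delivers) and then bounds $\E[d_4^2] \le (\E[d_4^4])^{1/2}$, whereas you square the stated inequality and bound $\E[d_4^4]$ directly; your closing caveat about the stability constant depending on the random empirical law $\mu_N(0)$ is a legitimate bookkeeping point that the paper's proof also glosses over.
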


\begin{remark}
We should note that the difference between the results of \cref{Approximation of parameters} and Corollary~\ref{corollary} is that the estimate \eqref{eq:nocurseofdimensionality} does not suffer from the curse of dimensionality, while the stronger estimate \eqref{eq:curseofdim} does. The later dependence on dimension is typical and sharp for the approximation of the Wasserstein distance with sampled empirical measures, see \cite{dudley1978central,fournier:hal-00915365,bolley2007quantitative}. This stiff dependence on dimension suggests that studying the long time behavior of the mean field dynamics of smooth initial data $(\mu(t),\nu(t))$ is not necessarily applicable in practice. Instead, the focus should be to show that with high probability that discrete mean field trajectories $(\hat{\mu}_N(t),\hat{\nu}_N(t))$ converge to a desirable saddle point of the dynamics. See \cref{sec:modecollapse} for an explicit example of long time behavior.
\end{remark}

\begin{proof} [Proof of Corollary~\ref{corollary}]
We consider the auxiliary pair of random measure-valued paths $(\hat{\mu}_N,\hat{\nu}_N)$ which are a solution to \eqref{eq:dynamics} with stochastic initial conditions $(\mu_N(0),\nu_N(0))$, that is
$$
\hat{\mu}_N(0)=\mu_N(0)=\frac{1}{N}\sum_{i=1}^N\delta_{\theta_{i,in}}\qquad\mbox{and}\qquad\hat{\nu}_N(0)=\nu_N(0)=\frac{1}{N}\sum_{i=1}^N\delta_{\omega_{i,in}},
$$
where $\theta_{i,in}$ and $\omega_{i,in}$ are $N$ independent samples from $\mu_{in}$ and $\nu_{in}$, respectively.

By the large deviation estimate in \cite{fournier:hal-00915365}, for $q$ large enough we have
$$
\E[d_4^4((\hat{\mu}_N(0),\hat{\nu}_N(0)),(\mu_{in},\nu_{in}))] \leq C M_q^{\frac{4}{q}}\left(\frac{1}{N^{\frac{4}{K(L+2)}}} + \frac{1}{N^{\frac{q-4}{q}}}\right),
$$
where $M_q$ denotes the $q$-th moment of $\mu_{in}\otimes\nu_{in}$. By \cref{thm:1}, taking $q$ large enough, and using that $\mu_{in}\otimes \nu_{in}$ has finite moments of all orders we have
$$
\E[d_2^2((\hat{\mu}_N(t),\hat{\nu}_N(t)),(\mu(t),\nu(t))] \leq C\E[d_4^2((\hat{\mu}_N(0),\hat{\nu}_N(0)),(\mu(0),\nu(0)))]\le \frac{C}{N^{\frac{2}{K(L+2)}}}.
$$
By the triangle inequality,
$$
 d_2((\mu(t),\nu(t)),(\mu_N(t),\nu_N(t))) \leq d_2((\mu_N(t),\nu_N(t)),(\hat{\mu}_N(t),\hat{\nu}_N(t)) +
d_2((\hat{\mu}_N(t),\hat{\nu}_N(t)),(\mu(t),\nu(t)),
$$
so the result follows by the previous estimate and  \cref{Approximation of parameters}.
\end{proof}

\section{Mode Collapse and Oscillatory Behavior}\label{sec:modecollapse}
A standard problem of GANs is known as mode collapse,  \cite{srivastava2017veegan,metz2016unrolled,thanh2020catastrophic}. This can be broadly described as the generator outputting only a small subset of the types of clusters that are present in the original distribution. Although the generator outputs a convincing sample if considered individually, the overall distribution of samples is off. An extreme example is when the generator outputs almost identical samples for any value of the latent variable $z$.

An explanation of this behavior for the original GAN algorithm is the use of Jensen-Shannon divergence $\mathbb{JS}(G\#\mathcal{N},P_*)$, see Remark~\ref{rem:GAN}. More specifically, if the measures are mutually singular $G\#\mathcal{N}\perp P_*$, then $\mathbb{JS}(G\#\mathcal{N},P_*)=\log(2)$ independently of how close the supports are to each other. Namely, the gradient of the associated loss function vanishes and there are no local incentives for the generator to keep learning. As we are not expecting for the support of these measures to be absolutely continuous, in fact we are postulating that in some sense the dimension of the support of $G\#\mathcal{N}$ is smaller than $L\ll K$, this case is more likely to be normal than the exception.

The W-GAN \cite{arjovsky2017wasserstein} and its improved variant \cite{gulrajani2017improved} try to fix this by considering 1-Wasserstein distance instead which does not suffer from the vanishing gradient problem. Still, training the Generator to get useful outputs is not an easy task, it requires a lot of computation time and more often than not it fails to converge. For instance to produce realistic looking images \cite{karras2017progressive} took 32 days of GPU compute time, and the networks are trained on progressively higher and higher resolution images to help with convergence.

\subsection{An explicit example}

Consider a bimodal distribution as the toy example: 
$$
P_*=\frac{1}{2}\delta_{-1}+\frac{1}{2}\delta_1\in\Prob(\R).
$$ 
We consider the simplest network that can approximate this measures perfectly. We consider the generator, depending on a single parameter $g\in\R$ to be given by
$$
G(z,g)=\begin{cases}
    -1 & x<g\\
    1 & x>g.
\end{cases}
$$
Although, this generator architecture seems far from our assumptions \ref{sec:NN}. This type of discontinuity arises naturally as a limit when the parameters go to infinity. Namely, if we take $b,\,c\to\infty$ in such a way that $c/b\to g\in \R$, then
$$
\sigma(b x+c)\to \begin{cases}
    0 & x<g\\
    1 & x>g,
\end{cases}
$$
where $\sigma$ is the sigmoid. The generator $G$ can then be recovered as a linear combination of two such limits. The generated distribution is given by
$$
G_g\#P=\Phi(g)\delta_{-1}+(1-\Phi(g))\delta_{1},
$$
where $\Phi(g)=P(\{z<g\})$ is the cumulative distribution function of the prior distribution $P\in\mathcal{P}(\R)$, which we can chose. We make the choice of the cumulative distribution
$$
\Phi(g)=\frac{1}{1+e^{-g}}\qquad\mbox{for $g\in\R$}
$$
to simplify the calculations. Under this choice for $g=0$, we have that $G_g\#P=P_*$, hence the network can approximate the target measure perfectly.

Moreover, we can explicitly compute the 1-Wasserstein distance,
$$
d_1(G_g\#P,P_*)= \left|\frac{1}{2}-\Phi(g)\right|,
$$
see the figure below.
\begin{center}
\begin{tikzpicture}\label{fig1}
\begin{axis}[
xlabel={$g$},
ylabel={$d_1(G_g\#P,P_*)$},
]
\addplot [blue] table {cdf.dat};
\end{axis}
\end{tikzpicture}
\end{center}
We can clearly see that this function has a unique minimum at $g_*=0$, and also that this function is concave in $g$ away from $g=g_*$. The concavity of the functional makes the problem more challenging from the theoretical perspective and it will explain the oscillatory behavior of the algorithm close to the minimizer $g_*$.

For the discriminator, we consider a ReLU activation given by
$$
D(x;\omega)=(\omega x)_+
$$
with $\omega\in[-1,1]$. We note that taking a single parameter, instead of a distribution, for the discriminator is supported by the mean field dynamics \eqref{eq:dynamics}. In the sense that under a bad initialization of parameters, the parameters of the discriminator can blow up in finite time to $\nu=\delta_{\omega(t)}$. 

We consider the joint dependence function
\begin{eqnarray*}
    \Psi(\omega,g)&=&\int_{\R} D_\omega(G_g(z))\;dP(z)-\int_{\R} D_\omega(x)\;dP_*(x)\\
    &=& \Phi(g)(-\omega)_++(1-\Phi(g))(\omega)_+-\frac{1}{2}(-\omega)_+-\frac{1}{2}(\omega)_+\\
     &=& \left(\frac{1}{2}-\Phi(g)\right)\omega.
\end{eqnarray*}


\begin{center}
\begin{tikzpicture}
    \begin{axis}[
        view={60}{30}, 
        xlabel={$g$},
        ylabel={$\omega$},
        zlabel={$\Psi(\omega,g)$},
        domain=-2:2,
        y domain=-2:2,
        samples=50,
        samples y=50,
        grid=major,
    ]
    \addplot3[
        surf,
        mesh/rows=50,
        colormap/viridis
    ] { (0.5 - 1 / (1 + exp(-x))) * y };
    \end{axis}
\end{tikzpicture}
\end{center}

Ignoring, for now, the projection onto $\omega\in[-1,1]$, we have the dynamics
$$
\left\{\begin{array}{rcccl}
\ds\dot{g}(t)&=&\ds -\nabla_{g}\Psi[g, \omega]&=&\ds\frac{1}{2}\frac{e^{-g}}{(1+e^{-g})^{2}}\omega \\ 
\ds\dot{\omega}(t)&=&\ds\gamma_c \nabla_{\omega}\Psi[g, \omega] &=&\ds\frac{\gamma_c}{2}\frac{e^{-g}-1}{1+e^{-g}},
\end{array}\right.
$$
where $\gamma_c$ is the critics speed up \eqref{eq:speedup}. These dynamics can be integrated perfectly, to obtain that
$$
E_{\gamma_c}(\omega(t),g(t))=2\cosh(g(t))+\frac{|\omega(t)|^2}{\gamma_c}=2\cosh(g_{in})+\frac{|\omega_{in}|^2}{\gamma_c}=E_{\gamma_c}(\omega_{in},g_{in}).
$$

\hspace{-1.7cm}\begin{tikzpicture}\label{fig1}
\begin{axis}[
        title={Contours of \( \gamma_c=1 \)},
        xlabel={$g$},
        ylabel={$\omega$},
        xmin=-3, xmax=3,
        ymin=-2.5, ymax=2.5,
    ]
\addplot [black, smooth, mark=none] table {data2.1.table};
\addplot [brown, smooth, mark=none] table {data2.5.table};
\addplot [red, smooth, mark=none] table {data3.table};
\addplot [cyan, smooth, mark=none] table {data4.table};
\addplot [magenta, smooth, mark=none] table {data5.table};
\addplot [green, smooth, mark=none] table {data10.table};
\addplot [blue] {1};
\addplot [blue] {-1};

\end{axis}
\end{tikzpicture}
\begin{tikzpicture}\label{fig1}
\begin{axis}[
        title={Contours of \( \gamma_c=10 \)},
        xlabel={$g$},
        legend cell align=left,
        legend pos=outer north east,
        xmin=-3, xmax=3,
        ymin=-2.5, ymax=2.5,
    ]
\addplot [black, smooth, mark=none] table {10data2.1.table};
\addplot [brown, smooth, mark=none] table {10data2.5.table};
\addplot [red, smooth, mark=none] table {10data3.table};
\addplot [cyan, smooth, mark=none] table {10data4.table};
\addplot [magenta, smooth, mark=none] table {10data5.table};
\addplot [green, smooth, mark=none] table {10data10.table};
\addplot [blue] {1};
\addplot [blue] {-1};

\legend{$E_{\gamma_c}=2.1$,$E_{\gamma_c}=2.5$, $E_{\gamma_c}=3$,$E_{\gamma_c}=4$,$E_{\gamma_c}=5$, $E_{\gamma_c}=10$,$|\omega|=1$}
\end{axis}
\end{tikzpicture}

In the figure above, we plot the level sets of $E_{\gamma_c}$ as well as the restriction of $|\omega|\le 1$. We notice that, given the value of $\gamma_c$ there exists a unique level set 
$$
E_*(\gamma_c)=2+\frac{1}{\gamma_c}
$$ 
such that the level set $\{E_{\gamma_c}=E_*\}$ is tangent to the restriction $|\omega|\le 1$. 

Now, we consider the dynamics with the restriction $|\omega|\le 1$. We notice that for any initial conditions $(\omega_{in},g_{in})$ satisfying $E_{\gamma_c}(\omega_{in},g_{in})\le E_*(\gamma_c)$ the trajectory of parameters is unaffected by the restriction $|\omega|\le 1$ and it is time periodic. On the other hand, if we consider initial conditions $(\omega_{in},g_{in})$ satisfying $E_{\gamma_c}(\omega_{in},g_{in})> E_*(\gamma_c)$ and $|\omega_{in}|\le 1$, the trajectory will follow the unconstrained dynamics until it hits the boundary of the restriction $\omega(t)\in \partial Q=\{|\omega|= 1\}$. Then it follows on the boundary $\omega(t)\in\partial Q=\{|\omega|=1\}$ until it reaches the point $(\omega(t_*),g(t_*))=(\pm 1,0)$ on the tangential level set $\{E_{\gamma_c}=E_*\}$ and start following this trajectory becoming time periodic. Hence, there exists $t_*=t(E_{\gamma_c}(\omega_{in},g_{in}))$ large enough, such that the trajectory $(\omega(t),g(t))\in \{E_{\gamma_c}(\omega_{in},g_{in})=E_*\}$ for $t>t_*$.
Therefore, we can conclude that
$$
|g(t)|\le \cosh^{-1}\left(1+\frac{1}{2\gamma_c}\right)\qquad\forall t>t_*.
$$
Looking back at the figure, we can see that for $\gamma_c=1$ that the limiting trajectory is $\{E_{1}=3\}$, and that the generator parameter oscillates in the range $|g(t)|\le 0.96$ for $t>t_*$. While for $\gamma_c=10$, we obtain that the limiting trajectory is $\{E_{10}=2.1\}$ and the limiting oscillations are smaller
$|g(t)|\le 0.31$ for $t>t_*$.

We do notice that regardless of the parameter $\gamma_c$ and the initial configuration, the limiting trajectory is always periodic in time. In fact, we expect that every trajectory of the mean field dynamics settles into a periodic solution.

\section{Properties of the mean field}\label{sec:meanfield}

One of the main theoretical obstructions to understand the well-posedness of this flow is that the projection operator $\mbox{Proj}_{\pi_{Q}}$ induces a discontinuous vector field in \eqref{eq:dynamics}. Nevertheless, the convexity of the domain $Q=[-1,1]\times[-1,1]^K\times[-1,1]$ can be leveraged to obtain a stability estimate.

Given a time dependent continuous vector field $V:[0,\infty)\times Q\to \R\times\R^K\times\R$, its projection $\mbox{Proj}_{\pi_{Q}}V_t$ is a Borel measurable vector field which is square integrable in space and time for any finite time horizon $T>0$ and curve of probability measures $\nu\in C([0,\infty),\mathcal{P}(Q))$,
$$
    \int_0^T\left(\int_Q |\mbox{Proj}_{\pi_{Q}}V_t|^2\;d\nu_t\right)\;dt<\infty.
$$
Hence, as long as the underlying velocity field inducing the motion is continuous, we can consider the notion of weak solution for the continuity equation given by \cite[Chapter 8]{ambrosio2005gradient}. 


With this in mind, we first notice the Lipschitz continuity properties of the vector fields that induce the motion \eqref{eq:dynamics}. More specifically, we denote by
\begin{equation}\label{eq:Vtheta}
    V^\Theta_{(\mu,\nu)}(\theta)= - \nabla_\theta \frac{\delta E}{\delta \mu}[\mu,\nu](\theta) =\mathbb{E}_zv^\Theta_{(\mu,\nu)}(\theta,z)
\end{equation}
and
\begin{equation}\label{eq:Vomega}
V^\Omega_{(\mu,\nu)}(\omega)= \nabla_\omega \frac{\delta E}{\delta \nu}[\mu,\nu](\omega) = \mathbb{E}_z\mathbb{E}_xv^\Omega_{(\mu,\nu)}(\omega,z,x),
\end{equation}
where we define the vector fields
\begin{equation}\label{eq:vomega}
v^\Theta_{(\mu,\nu)}(\theta,z) = - \nabla_{\theta}\int_{[-1,1]^{1+K+1}}\nabla_1\sigma(G_\mu(z);\omega)\cdot \left(\sigma(z;\theta_1),\,...\,,\sigma(z;\theta_K)\right)d\nu(\omega)
\end{equation}
and
\begin{equation}\label{eq:vtheta}
v^\Omega_{(\mu,\nu)}(\omega,z,x) = \nabla_{\omega} [\sigma(G_\mu(z);\omega)\; - \sigma(x;\omega)].
\end{equation}
In Lemma~\ref{lem:auxV} below, we show that $V_{(\mu,\nu)}^\Theta(\theta)$ and $V_{\mu,\nu}^\Omega(\omega)$ are Lipschitz continuous with respect to the dependence of arguments $\theta,$ $\omega$ as well as the measure arguments $(\mu,\nu)$. Notice that $V^\Omega$ and $v^\Omega$ do not depend on $\nu$, only on $\mu$.

By \cite[Theorem 8.2.1]{ambrosio2005gradient}, any continuous solution to the continuity equation \eqref{eq:dynamics} is supported over solutions of the associated characteristic field. Using the classical theory \cite{henry1973existence} for projected ODE flows, we can show that the characteristic equations
\begin{equation}\label{eq:projectedODEs}
\begin{cases}
    \frac{d}{dt}(\theta,\omega)=(V^\Theta_{(\mu,\nu)}(\theta_i),\mbox{Proj}_{\pi_Q(\omega_i)}V^\Omega_{(\mu,\nu)}(\omega_i))\\
    (\theta,\omega)(0)=(\theta_{in},\omega_{in})
\end{cases}    
\end{equation}
have a unique solution. More specifically, an absolutely continuous curve $(\mu,\nu)\in AC([0,\infty);\mathcal{P}\left((\R^{L+2})^{K}\right)\times \mathcal{P}(Q))$ is a weak solution to \eqref{eq:dynamics}, if it is given as the image of the initial distributions $(\mu_{in},\nu_{in})$ through the unique projected ODE flow. That is to say, 
\begin{equation}\label{eq:mildsolution}
(\mu,\nu)(t)=\Phi^t_{(\mu,\nu)}\#(\mu_{in},\nu_{in})    
\end{equation}
the family of continuous mappings $\Phi^t_{\mu,\nu}:(\R\times \R^{L}\times \R)^{K}\times Q \to (\R\times \R^{L}\times \R)^{K}\times Q$ given by 
\begin{equation}\label{eq:diffeos}
    \Phi^t_{(\mu,\nu)}(\omega_{in},\theta_{in})=(\omega(t),\theta(t)),   
\end{equation}
where $(\omega(t),\theta(t))$ is the unique Lipschitz solution to \eqref{eq:projectedODEs}.

One of the main technical hurdles is that the vector fields inducing the motion are only locally Lipschitz. The Lipschitz constant depends itself on the size of $\alpha$, which is the first variable of $\theta$. Hence, to obtain a stability estimates we need to measure the distance of the initial condition in a $p$-Wasserstein distance with $p>2$. The choice of $p=4$ in the following result is arbitrary.
\begin{proposition}[Stability]\label{Stability PDE} Assume $(\mu_1,\nu_1),(\mu_2,\nu_2)\in AC\left([0,\infty);\mathcal{P}\left((\R^{L+2})^{K}\right)\times \mathcal{P}(Q)\right)$ are weak solutions to \eqref{eq:dynamics} which satisfies \eqref{eq:mildsolution}. Assume that the initial distribution has bounded exponential moments in the following sense: there exists $\delta>0$ such that
$$
\int e^{\delta |\alpha|^2}\;d\mu_{1,in},\,\int e^{\delta |\alpha|^2}\;d\mu_{2,in}<\infty.
$$
Then for any $t>0$ we have the bound 
\begin{equation}\label{eq:stabilitymetric}
d_2((\mu_1(t),\nu_1(t)),(\mu_2(t),\nu_2(t)))\le A(t)e^{B(t)}d_4^2( (\mu_{1,in},\nu_{1,in}) , (\mu_{2,in},\nu_{2,in}) ),
\end{equation}
where
$$
A(t) = e^{C\left(t^2 + t\Lambda \right)}\left( \int e^{Ct|\alpha|}d\mu_{\mathrm{1,in}}+ \int e^{Ct|\alpha|}d\mu_{\mathrm{2,in}}\right)^{1/2},
$$
and 
$$
B(t) = C tA(t)\left(  t + \Lambda \right),
$$
with
$$
\Lambda = 1 + \left(\int |\alpha|^2 d\mu_{1,\mathrm{in}}\right)^{1/2} + \left(\int |\alpha|^2 d\mu_{2,\mathrm{in}}\right)^{1/2}
$$
and $C>0$ a constant that only depends on $\|\sigma\|_{C^2}$.
\end{proposition}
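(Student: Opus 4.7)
The plan is to represent each PDE solution through its characteristic flow, as asserted in \eqref{eq:mildsolution}--\eqref{eq:diffeos}, and then Gronwall-control a pointwise-coupled distance. Let $\gamma_0$ be an optimal coupling for $d_4$ between $(\mu_{1,\mathrm{in}},\nu_{1,\mathrm{in}})$ and $(\mu_{2,\mathrm{in}},\nu_{2,\mathrm{in}})$, and sample paired initial data $((\theta_{1},\omega_{1}),(\theta_{2},\omega_{2}))\sim\gamma_0$. Letting $(\theta_i(t),\omega_i(t))$ evolve by the characteristic ODEs \eqref{eq:projectedODEs} driven by $(\mu_i,\nu_i)$, the pushforward of $\gamma_0$ under $\Phi^t_{(\mu_1,\nu_1)}\times\Phi^t_{(\mu_2,\nu_2)}$ is a (not necessarily optimal) coupling of $(\mu_i(t),\nu_i(t))$, so
$$
d_2^2\bigl((\mu_1(t),\nu_1(t)),(\mu_2(t),\nu_2(t))\bigr)\le\mathbb{E}_{\gamma_0}\!\left[|\theta_1(t)-\theta_2(t)|^2+|\omega_1(t)-\omega_2(t)|^2\right].
$$

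Next I would derive a pointwise differential inequality along these trajectories. Differentiating and splitting each increment into a same-measure piece and a same-point piece yields
$$
\frac{d}{dt}|\theta_1-\theta_2|^2=2(\theta_1-\theta_2)\cdot\bigl(V^\Theta_{(\mu_1,\nu_1)}(\theta_1)-V^\Theta_{(\mu_1,\nu_1)}(\theta_2)\bigr)+2(\theta_1-\theta_2)\cdot\bigl(V^\Theta_{(\mu_1,\nu_1)}(\theta_2)-V^\Theta_{(\mu_2,\nu_2)}(\theta_2)\bigr).
$$
By Lemma~\ref{lem:auxV} the first term is bounded by $C(1+|\alpha_1|+|\alpha_2|)|\theta_1-\theta_2|^2$ and the second by $C(1+|\alpha_2|)|\theta_1-\theta_2|\,d_2((\mu_1,\nu_1),(\mu_2,\nu_2))$. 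For the $\omega$-component I would use the standard convex-cone monotonicity: writing $\mathrm{Proj}_{\pi_Q(\omega_i)}V_i=V_i-N_i$ with $N_i$ in the outward normal cone $N_Q(\omega_i)$, convexity of $Q$ gives $(\omega_1-\omega_2)\cdot(N_1-N_2)\ge 0$, hence
$$
(\omega_1-\omega_2)\cdot\bigl(\mathrm{Proj}_{\pi_Q(\omega_1)}V_1-\mathrm{Proj}_{\pi_Q(\omega_2)}V_2\bigr)\le(\omega_1-\omega_2)\cdot(V_1-V_2).
$$
The projection therefore disappears and the same Lipschitz machinery applies. Setting $D^2=|\theta_1-\theta_2|^2+|\omega_1-\omega_2|^2$ and using Young's inequality one obtains
$$
\frac{d}{dt}D^2\le C(1+|\alpha_1|+|\alpha_2|)D^2+C(1+|\alpha_1|+|\alpha_2|)\,d_2^2\bigl((\mu_1,\nu_1),(\mu_2,\nu_2)\bigr).
$$

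The third step is a two-layer Gronwall argument. Because $\partial_\alpha\sigma$ is bounded, $\dot\alpha$ is bounded, so $|\alpha_i(s)|\le|\alpha_i(0)|+Cs$ and $\int_0^t(|\alpha_1|+|\alpha_2|)\,ds\le t(|\alpha_{1,\mathrm{in}}|+|\alpha_{2,\mathrm{in}}|)+Ct^2$. Gronwall in $s$ then yields
$$
D^2(t)\le e^{Ct(|\alpha_{1,\mathrm{in}}|+|\alpha_{2,\mathrm{in}}|)+Ct^2}\!\left(D^2(0)+C\!\int_0^t(1+|\alpha_{1,\mathrm{in}}|+|\alpha_{2,\mathrm{in}}|+s)\,d_2^2\bigl((\mu_1,\nu_1),(\mu_2,\nu_2)\bigr)(s)\,ds\right).
$$
Taking $\mathbb{E}_{\gamma_0}$ and applying Cauchy--Schwarz splits the exponential Lipschitz factor from the squared initial distance: $\mathbb{E}_{\gamma_0}[e^{Ct|\alpha_{i,\mathrm{in}}|}D^2(0)]\le(\mathbb{E} e^{2Ct|\alpha_{i,\mathrm{in}}|})^{1/2}(\mathbb{E} D^4(0))^{1/2}$. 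The first factor is finite by the exponential-moment hypothesis \eqref{eq:expmoment} and produces the prefactor $A(t)$; the second factor is exactly $d_4^2$ of the initial measures. A second Gronwall in $t$ on the resulting integral inequality for $t\mapsto d_2^2((\mu_1(t),\nu_1(t)),(\mu_2(t),\nu_2(t)))$ absorbs the right-hand integral and produces the $e^{B(t)}$ factor, with $\Lambda$ collecting the additional first-moment weights coming from the $|\alpha_i|$'s generated by the measure-Lipschitz term.

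The main obstacle is the simultaneous presence of the discontinuous projection in the $\omega$-equation and the linear-in-$|\alpha|$ growth of the Lipschitz constants. The projection is handled cleanly by the convex-cone monotonicity that kills the normal-cone discrepancy, so no derivative of $\mathrm{Proj}_{\pi_Q}$ is ever needed. The genuine difficulty is the unbounded Lipschitz constant: it forces one to measure initial proximity in $d_4$ rather than $d_2$, and to assume the exponential moment \eqref{eq:expmoment} so that $\mathbb{E} e^{2Ct|\alpha_{\mathrm{in}}|}<\infty$ for all $t<\infty$. These two ingredients together determine the precise form of $A(t)$ and $B(t)$ in the statement.
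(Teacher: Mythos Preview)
Your proposal is correct and follows essentially the same route as the paper: the paper isolates your first Gronwall layer (the pointwise ODE-flow estimate using projection monotonicity and the $|\alpha|$-dependent Lipschitz bounds of Lemma~\ref{lem:auxV}) into a separate Lemma~\ref{lem:stability1}, and then the proof of Proposition~\ref{Stability PDE} integrates that estimate against the $d_4$-optimal coupling, applies Cauchy--Schwarz to split off the exponential weight, and runs the second Gronwall---exactly as you outline. One small refinement: in your differential inequality the Lipschitz constant from Lemma~\ref{lem:auxV} also carries the term $A(\mu_1,\mu_2)=1+(\int|\alpha|^2\,d\mu_1)^{1/2}+(\int|\alpha|^2\,d\mu_2)^{1/2}$, which (after Lemma~\ref{lem:growth}) becomes $\Lambda+Ct$ and is what actually produces the $\Lambda$ in $A(t)$ and $B(t)$; you allude to this at the end but it should appear already in the pointwise bound.
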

\begin{remark}
    The double exponential growth on the estimate is related to the  dependence Lipschitz constant of the vector field with respect to the size of the parameters themselves, see Lemma~\ref{lem:auxV} for the specific estimates.
\end{remark}    

For discrete initial conditions, existence to \eqref{eq:dynamics} follows from applying the results in \cref{app}. Using stability, we can then approximate the initial condition by taking discrete approximations of it.
\begin{proposition}[Existence]\label{prop:existence}
For any initial condition $(\mu_{in},\nu_{in})\in\mathcal{P}\left((\R^{L+2})^{K}\right)\times \mathcal{P}(Q)$ satisfying that there exists $\delta>0$ such that
$$
\int e^{\delta |\alpha|^2}\;d\mu_{in}<\infty,
$$
there exists $(\mu,\nu)\in AC\left([0,\infty);\mathcal{P}\left((\R^{L+2})^{K}\right)\times \mathcal{P}(Q)\right)$ a weak solution to \eqref{eq:dynamics} which satisfies the mild formulation \eqref{eq:mildsolution}.  
\end{proposition}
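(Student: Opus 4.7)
The plan is to approximate the initial condition by discrete empirical measures, invoke the projected ODE theory of \cref{app} for the resulting particle systems, and pass to the limit using the stability estimate of \cref{Stability PDE}. First, I construct a sequence $(\mu_{in}^n, \nu_{in}^n)$ of empirical measures, each supported on $n$ atoms, satisfying (a) $d_4((\mu_{in}^n, \nu_{in}^n), (\mu_{in}, \nu_{in})) \to 0$ as $n \to \infty$, and (b) the uniform exponential moment bound $\sup_n \int e^{\delta' |\alpha|^2}\,d\mu_{in}^n < \infty$ for some $\delta' \in (0, \delta)$. Since \eqref{eq:expmoment} guarantees all polynomial moments of $\mu_{in}$ are finite, this is obtained by first restricting $\mu_{in}$ to a ball of growing radius in the $\alpha$-direction (which preserves the exponential moment at a slightly smaller rate $\delta'<\delta$) and then applying standard Wasserstein quantization estimates to the compactly supported truncations.

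For each $n$, discrete initial data reduces \eqref{eq:dynamics} to the $n$-particle mean-field projected ODE system \eqref{eq:projectedODEs}, driven by the associated empirical measures themselves. By \cref{lem:auxV} the unprojected vector fields $V^\Theta_{(\mu,\nu)}$, $V^\Omega_{(\mu,\nu)}$ are locally Lipschitz in all arguments, and the $\omega$-component lies in the compact convex set $Q$, so the well-posedness result recalled in \cref{app} produces a unique Lipschitz trajectory. The pushforward of $(\mu_{in}^n, \nu_{in}^n)$ through this projected flow defines a curve $(\mu^n, \nu^n) \in AC([0, \infty); \mathcal{P}((\R^{L+2})^K) \times \mathcal{P}(Q))$ that is a weak solution to \eqref{eq:dynamics} satisfying the mild formulation \eqref{eq:mildsolution} by construction. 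Applying \cref{Stability PDE} to two such curves gives
$$
\sup_{t \in [0, T]} d_2\bigl((\mu^n(t), \nu^n(t)), (\mu^m(t), \nu^m(t))\bigr) \le A(T) e^{B(T)} d_4^2\bigl((\mu_{in}^n, \nu_{in}^n), (\mu_{in}^m, \nu_{in}^m)\bigr),
$$
and the uniform moment bound (b) forces $A(T), B(T)$ to be bounded independently of $n,m$ (using $Ct|\alpha| \le \delta'|\alpha|^2 + (Ct)^2/(4\delta')$ to control $\int e^{Ct|\alpha|}\,d\mu_{in}^n$). Thus $\{(\mu^n, \nu^n)\}$ is Cauchy in $C([0, T]; \mathcal{P}_2)$ and converges to a limit $(\mu, \nu)$.

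The main technical obstacle is identifying the limit as a weak solution obeying the mild representation \eqref{eq:mildsolution}, because $\mbox{Proj}_{\pi_{Q}}$ is discontinuous on $\partial Q$ and classical ODE stability does not apply directly. The plan is to upgrade the $d_2$-convergence of the driving measures to local uniform convergence of the vector fields $V^\Theta_{(\mu^n, \nu^n)}$, $V^\Omega_{(\mu^n, \nu^n)}$ via the Lipschitz-in-measure estimates of \cref{lem:auxV}, and then invoke the continuous dependence theorem for projected ODEs recalled in \cref{app} to conclude that the flows $\Phi^t_{(\mu^n, \nu^n)}$ converge locally uniformly to a projected flow driven by $(\mu, \nu)$ itself. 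Passing to the limit in the mild formulation yields $(\mu, \nu)(t) = \Phi^t_{(\mu, \nu)} \# (\mu_{in}, \nu_{in})$. Absolute continuity of $t \mapsto (\mu(t), \nu(t))$ in $\mathcal{P}_2$ then follows from uniform $L^2$-velocity bounds, which are propagated from the initial moment control by Gronwall applied to \eqref{eq:projectedODEs} and preserved in the limit.
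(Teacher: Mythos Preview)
Your proposal is correct and follows essentially the same route as the paper: discretize the initial data, solve the resulting particle system via the projected ODE theory of \cref{app}, use \cref{Stability PDE} to obtain a Cauchy sequence, and identify the limit by passing to the limit in the flow maps. Two minor remarks: the paper's discretization is a deterministic box-based construction choosing the smallest-norm point in each cell, which preserves the exponential moment \emph{monotonically} (no loss from $\delta$ to $\delta'$); and the continuous-dependence-on-the-driving-measure result you invoke is not in \cref{app} (which only has stability in the initial condition for a fixed $V$) but is precisely \cref{lem:stability1}, so you should cite that instead.
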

\begin{proof}[Proof of Proposition~\ref{prop:existence}]
    For any $L\in\N$ we consider a deterministic discretization 
    $$
    \mu^L_{in}=\sum_{i=1}^Lw_i\delta_{\theta^i_L}
    \,\qquad\mbox{and}\qquad\nu^L_{in}=\sum_{i=1}^Lv_i\delta_{\omega^i_L}
    $$ 
    of the initial conditions 
    $\mu_{in},\,\nu_{in}$, where $w_i$ and $v_i$ are weights which add up to 1. The main properties we need from this discretization is that
    $$
        \lim_{L\to\infty}d_4((\mu^L_{in},\nu^L_{in}),(\mu_{in},\nu_{in}))=0 \qquad\mbox{and}\qquad \int e^{\delta |\alpha|^2}\;d\mu^L_{in}\le \int e^{\delta |\alpha|^2}\;d\mu_{in}.
    $$
    Such a discretization can be given by the following procedure. For simplicity we consider $R=2^{k(L+2)K}$, we divide the box $[-\log R,\log R]^{(L+2)K}$ into equal sized boxes $\{B_i\}_{i=1}^L$. We assign $\theta^i$ to be the the point with the smallest norm of the box $B_i$, and the weights are given by $w_i=\mu_{in}(B_i)$. We add any leftover mass on $([-\log R,\log R]^{(L+2)K})^c$ to the delta at the origin. We do the same to produce $\nu_{in}^L$.

    By \cref{app}, for any $L\in\N$ there exists a unique solution to the projected ODE associated to the solution of the mean field equations 
    with initial conditions given by $(\mu^L_{in},\nu^L_{in})$. Hence, we can construct a global weak solution to the PDE $(\mu^L(t),\nu^L(t))$. By the stability result, we know that for any finite time horizon $T>0$, $\{(\mu^L,\nu^L)\}_{L}$ form a Cauchy sequence in $AC([0,T],\mathcal{P}\left((\R^{L+2})^{K}\right)\times \mathcal{P}(Q))$. Hence, there exists $(\mu,\nu)\in AC\left([0,\infty);\mathcal{P}\left((\R^{L+2})^{K}\right)\times \mathcal{P}(Q)\right)$, such that for any fixed time horizon $T$
    $$
    \lim_{L\to\infty} \sup_{t\in[0,T]}d_2^2((\mu(t),\nu(t)),(\mu^L(t),\nu^L(t))=0.
    $$
    By Lemma~\ref{lem:growth}, $\mu^L$ satisfies the growth condition \eqref{eq:growth}, and so does $\mu$. By Lemma~\ref{lem:stability1}, we have that the associated projected ODE flows also converge
    \begin{eqnarray}
        \ds \lim_{L\to\infty}\sup_{t\in[0,T]}|\Phi^t_{(\mu^L,\nu^L)}(\theta,\omega)-\Phi^t_{(\mu,\nu)}(\tilde\theta,\tilde\omega)|^2 \leq e^{C(\Lambda+|\alpha|+|\tilde\alpha|)} |(\theta,\omega)-(\tilde\theta,\tilde{\omega})|^2.
    \end{eqnarray}
    Using that 
    $$
        (\mu^L(t),\nu^L(t))=\Phi^t_{(\mu^L,\nu^L)}\#(\mu_{in}^L\nu_{in}^L)
    $$
    and the uniform exponential integrability of $(\mu_{in}^L,\nu_{in}^L)$, we can conclude that
    $$
        (\mu(t),\nu(t))=\Phi^t_{(\mu,\nu)}\#(\mu_{in},\nu_{in}),
    $$
    which in turn implies that $(\mu,\nu)$ is a weak solution to \eqref{eq:dynamics} satisfying \eqref{eq:mildsolution}.
    \end{proof} 

For the next lemma we use the notation $\theta=(\theta_1,\ldots,\theta_K)$ with $\theta_i=(\alpha_i,\beta_i,\gamma_i)\in \R\times\R^L\times \R$, and $\alpha=(\alpha_1,\ldots,\alpha_K)\in \R^K$.
\begin{lemma}\label{lem:auxV} There exists $C\in\R$ depending on $\|\sigma\|_{C^1}$ such that the vector fields \eqref{eq:Vtheta}, \eqref{eq:Vomega}, \eqref{eq:vomega} and \eqref{eq:vtheta} satisfy the bounds
\begin{equation}\label{eq:infty1}
\left\|V^\Omega_{(\mu,\nu)}\right\|_{\infty}\leq C\left(1+\int |\alpha|d\mu\right),\qquad \left\|v^\Omega_{(\mu,\nu)}(\cdot,z,x)\right\|_\infty\leq C\left(1+|x|+\int |\alpha|d\mu\right),
\end{equation}
and 
\begin{eqnarray}
    \left\|\left(V^\Theta_{j}\right)_r\right\|_{\infty}&\leq& \begin{cases}
    C &\mbox{for $r=1$}\\
    C |\alpha_j| &\mbox{for $r\ne 1$},
\end{cases}\nonumber\\
\left\|\left(v^\Theta_{j}\right)_r\right\|_{\infty}&\leq& \begin{cases}
C &\mbox{for $r=1$}\\
C|\alpha_j| \left(1+|z|\right)&\mbox{for $r\ne 1$},
\end{cases}\label{eq:infty2}
\end{eqnarray}
where $(v_j)_r$ denotes the $r$-th component of the $j$-th position. 

Moreover, we have the following Lipschitz estimate. There exists $C\in\R$ depending on $\|\sigma\|_{C^2}$, such that
$$
\begin{array}{l}
     \ds\left|V^\Theta_{(\mu_1,\nu_1)}(\theta)-V^\Theta_{(\mu_2,\nu_2)}(\tilde{\theta})\right|\le C(|\alpha|+|\tilde\alpha|+A(\mu_1,\mu_2))\left(d_2((\mu_1,\nu_1),(\mu_2,\nu_2))+|\theta-\tilde{\theta}|\right)
\end{array}
$$
$$
|V^\Omega_{(\mu_1,\nu_1)}(\omega_1)-V^\Omega_{(\mu_2,\nu_2)}(\omega_2)|\le CA(\mu_1,\mu_2)\big(|\omega_1-\omega_2|+d_2(\mu_1,\mu_2)\big),
$$
and
$$
\begin{array}{l}
\ds |v^\Theta_{(\mu_1,\nu_1)}(\theta,z)-v^\Theta_{(\mu_2,\nu_2)}(\tilde{\theta},z)|\\
\ds \qquad\le C\left(|\alpha|+|\tilde{\alpha}|+A(\mu_1,\mu_2)+ |z|\right)\big(d_2((\mu_1,\nu_1),(\mu_2,\nu_2))+|\theta-\tilde{\theta}|\big),
\end{array}
$$
$$
|v^\Omega_{(\mu_1,\nu_1)}(\omega_1,z,x)-v^\Omega_{(\mu_2,\nu_2)}(\omega_2,z,x)|\le C\left(A(\mu_1,\mu_2) + |x|+|z|\right)\big(|\omega_1-\omega_2|+d_2(\mu_1,\mu_2)\big),
$$
where 
$$
A(\mu_1,\mu_2) = 1+\left(\int |\alpha|^2d\mu_1\right)^{1/2} + \left( \int |\alpha|^2d\mu_2\right)^{1/2},
$$
and
$\alpha_1$, $\alpha_2$ are the first components of $\theta_1$, $\theta_2$, respectively.
\end{lemma}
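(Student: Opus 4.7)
My plan is to prove \cref{lem:auxV} by direct differentiation, relying solely on the $C^2$ bounds on $\sigma$ and the fact that $\omega=(a,b,c)\in Q=[-1,1]^{K+2}$. For the sup bounds \eqref{eq:infty1}--\eqref{eq:infty2}, I would write the partial derivatives of $\sigma(z;\theta_j)=\alpha_j\sigma(\beta_j\cdot z+\gamma_j)$ and $\sigma(x;\omega)=a\sigma(b\cdot x+c)$ in closed form. The key observation is that $\partial_{\alpha_j}\sigma(z;\theta_j)=\sigma(\beta_j\cdot z+\gamma_j)$ is uniformly bounded, while $\partial_{\beta_j}\sigma$ and $\partial_{\gamma_j}\sigma$ each carry an extra factor $\alpha_j$ from the chain rule; this is exactly the $r=1$ versus $r\ne 1$ dichotomy in \eqref{eq:infty2}. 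For $V^\Omega$ and $v^\Omega$, the analogous derivatives in $\omega$ are uniformly bounded thanks to $\omega\in Q$, and the only non-trivial step is the elementary bound $|G_\mu(z)|\leq\|\sigma\|_\infty\int|\alpha|\,d\mu$, which produces the $\int|\alpha|d\mu$ term in \eqref{eq:infty1}.

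For the Lipschitz bounds, the central auxiliary estimate is the Wasserstein-$2$ continuity of the generator
\begin{equation}\label{eq:Gmu-lip}
|G_{\mu_1}(z)-G_{\mu_2}(z)|\leq C(1+|z|)\,A(\mu_1,\mu_2)\,d_2(\mu_1,\mu_2),
\end{equation}
which I would prove coordinate-wise by picking an optimal $d_2$-coupling $\pi_j$ of the $j$-th marginals and using
$|\alpha\sigma(\beta\cdot z+\gamma)-\tilde\alpha\sigma(\tilde\beta\cdot z+\tilde\gamma)|\leq \|\sigma\|_\infty|\alpha-\tilde\alpha|+|\tilde\alpha|\|\sigma'\|_\infty(1+|z|)|(\beta,\gamma)-(\tilde\beta,\tilde\gamma)|$, followed by Cauchy--Schwarz against $\pi_j$ to produce the $(\int|\tilde\alpha|^2 d\mu_2)^{1/2}$ factor absorbed into $A(\mu_1,\mu_2)$.

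Armed with \eqref{eq:Gmu-lip}, each of the four Lipschitz estimates follows by the same telescoping scheme: write the difference, e.g.\ $v^\Theta_{(\mu_1,\nu_1)}(\theta,z)-v^\Theta_{(\mu_2,\nu_2)}(\tilde\theta,z)$, as a sum of three contributions, obtained by varying in turn the input parameter $\theta\mapsto\tilde\theta$, the measure $\mu_1\mapsto\mu_2$ (which enters only through $G_\mu$, and is therefore controlled by \eqref{eq:Gmu-lip} together with the Lipschitz continuity of $\nabla_1\sigma(\,\cdot\,;\omega)$ with constant $\|\sigma\|_{C^2}$), and finally the measure $\nu_1\mapsto\nu_2$ (controlled by a Kantorovich--Rubinstein bound, since the $\omega$-integrand is $\Lip$ on the compact $Q$). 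The input-parameter piece uses a mean value argument along $[\theta,\tilde\theta]$ together with the Hessian bound $|\nabla_\theta^2\sigma(z;\theta)|\leq C(1+|\alpha|+|\tilde\alpha|)(1+|z|)^2$, which produces the $|\alpha|+|\tilde\alpha|$ prefactor in the $V^\Theta, v^\Theta$ estimates. Taking the expectation $\E_z$ (and also $\E_x$ for $V^\Omega$) absorbs the $(1+|z|),(1+|x|)$ growth into constants, whereas for $v^\Theta$ and $v^\Omega$ those factors persist, matching the stated inequalities.

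The principal obstacle is bookkeeping: one must keep the $|\alpha|$-type growth entirely out of the Lipschitz bounds for $V^\Omega$ and $v^\Omega$, in which only $A(\mu_1,\mu_2)$ is allowed. This is handled by never applying a $\theta$-derivative in those two quantities, so the whole $\mu$-dependence is concentrated in $G_\mu$ and enters only through a single application of \eqref{eq:Gmu-lip}. Dually, in the $V^\Theta$ and $v^\Theta$ bounds one must verify that the $\alpha$-factors in the Hessian multiply only the $(\beta,\gamma)$-components, matching the sup-norm structure \eqref{eq:infty2}; this follows directly from the explicit form of $\nabla_\theta^2\sigma(z;\theta)$, in which differentiation in $\alpha_j$ never produces a new $\alpha_j$ factor.
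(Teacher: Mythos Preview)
Your proposal is correct and follows essentially the same route as the paper: explicit differentiation of $\sigma(z;\theta)$ and $\sigma(x;\omega)$ for the sup bounds, the generator bound $|G_\mu(z)|\le C\int|\alpha|\,d\mu$, the coupling-plus-Cauchy--Schwarz estimate for $|G_{\mu_1}(z)-G_{\mu_2}(z)|$, and then a mean-value/telescoping argument for the Lipschitz bounds. The paper writes out $v^\Omega$ and $v^\Theta$ explicitly, carries the mean value computation in detail for $v^\Omega$, and then dispatches $v^\Theta$ with ``apply the same argument''; your description of the three-term telescoping (vary $\theta$, then $\mu$ via $G_\mu$, then $\nu$ via Kantorovich--Rubinstein on the compact $Q$) is a cleaner account of that same computation, and your bookkeeping remark about where the $|\alpha|$ factors are allowed to appear is exactly the point that makes the $V^\Omega$ estimate free of $|\alpha|$ and the $V^\Theta$ estimate not.
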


\begin{proof}[Proof of Lemma~\ref{lem:auxV}] Throughout the proof, we use the notation $\theta=(\theta_1,...,\theta_K)$ with $\theta_i=(\alpha_i,\beta_i,\gamma_i)\in \R\times\R^L\times \R$, $\alpha=(\alpha_1,...,\alpha_K)\in (\R^L)^K$, and $\omega=(a,b,c)\in Q$. We begin by explicitly writing out the vector fields
$$
 v^\Omega_{(\mu,\nu)}(\omega,z,x) = \begin{pmatrix}
    \sigma(b\cdot G_\mu(z)+c) - \sigma(b\cdot x + c)\\
    a G_\mu(z)\sigma'(b\cdot G_\mu(z)+c) - a x \sigma'(b\cdot x+c)\\
    a\sigma'(b\cdot G_\mu(z)+c)-a\sigma'(b\cdot x + c)
\end{pmatrix},   
$$
and $v^\Theta_{(\mu,\nu)}(\theta,z) = \left( v^\Theta_{(\mu,\nu);1}(\theta_1,z), \cdots, v^\Theta_{(\mu,\nu);K}(\theta_K,z)\right)$ with for $1\leq j\leq K$:
$$
    v^\Theta_{(\mu,\nu);j}(\theta_j,z) = - \int_{[-1,1]^{1+K+1}} \begin{pmatrix}
    a b_j\sigma(\beta_j\cdot z + \gamma_j)\sigma(b\cdot G_\mu(z) + c)\\
    ab_j\alpha_jz\sigma'(\beta_j\cdot z+\gamma_j)\sigma(b\cdot G_\mu(z) + c)\\
    ab_j\alpha_j\sigma'(\beta_j\cdot z + \gamma_j)\sigma(b\cdot G_\mu(z) + c)
\end{pmatrix}d\nu(\omega).
$$
Bounding the generator \eqref{eq:generator}, we have
\begin{equation}\label{eq:boundG}
|G_{\mu}(z)| \leq  \int |\sigma(z;\theta)|d\mu(\theta) \leq C\left(\int |\alpha|d\mu(\theta) \,\right).
\end{equation}
Using \eqref{eq:boundG}, and that $|a|,\,|b|,\,|c|\le 1$, we readily obtain \eqref{eq:infty1} and \eqref{eq:infty2}. 
Applying the mean value theorem,
\begin{eqnarray}
    &&\nabla_\omega \sigma(x_1;\omega_1)-\nabla_\omega\sigma(x_2;\omega_2)\nonumber\\
    &&\qquad= \begin{pmatrix}
\sigma'(\xi_0) [b_1\cdot x_1 - b_2\cdot x_2 + c_1 - c_2]\\
    a_1x_1\, \sigma''(\xi_1) [(b_1\cdot x + c_1) - (b_2\cdot y + c_2)] + (x_1(a_1-a_2)+(x_1-x_2)a_2)\sigma'(b_2\cdot y + c_2)\\
a_1 \sigma''(\xi_1) [(b_1\cdot x_1 + c_1) - (b_2\cdot x_2 + c_2)] + (a_1-a_2)\sigma'(b_2\cdot x_2 + c_2)
\end{pmatrix},\nonumber
\end{eqnarray}
where $\xi_0,\xi_1$ are points in between $b_1\cdot x + c_1$ and $b_2\cdot y + c_2$. To obtain the estimate for $v^\Omega$, we consider the difference above in two instances $x_1=x_2=x$, and taking $x_1=G_{\mu_1}(z)$ and $x_2=G_{\mu_2}(z)$. Using the triangle inequality, and $\|\sigma\|_{C^2}<\infty$, we can conclude
\begin{eqnarray}
&&|v^\Omega_{(\mu_1,\nu_1)}(\omega_1,z,x)-v^\Omega_{(\mu_2,\nu_2)}(\omega_2,z,x)|\nonumber\\
&&\qquad\leq C\,\left(1+ |x| +G_{\mu_1}(z)+G_{\mu_2}(z)\right) \, \left( |\omega_1-\omega_2| + | G_{\mu_1}(z)-G_{\mu_2}(z)|\right).\nonumber
\end{eqnarray}

To estimate $G_{\mu_1}(z)-G_{\mu_2}(z)$, we consider $\pi$ a coupling between $\mu_1$ and $\mu_2$, and notice that the difference is given by
$$
|G_{\mu_1}(z)-G_{\mu_2}(z)| = \left| \int \sigma(z;\theta)-\sigma(z;\tilde{\theta}) d\pi(\theta,\tilde{\theta})\right| \leq \int |\sigma(z;\theta)-\sigma(z;\tilde{\theta})|\, d\pi.
$$
Estimating, 
$$
|\sigma(z,\theta)-\sigma(z,\tilde{\theta})| \leq C \big(1+(|\alpha|+|\tilde{\alpha}|)(1+|z|)\big)|\theta-\tilde{\theta}|.
$$ 
Applying the Cauchy-Schwarz inequality,
$$
|G_{\mu_1}(z)-G_{\mu_2}(z)|^2 \leq C \left(1+\left(\int |\alpha|^2\;d\mu_1+\int |\tilde{\alpha}|^2\;d\mu_2\right)(1+|z|^2)\right)\int|\theta-\tilde{\theta}|^2\;d\pi.
$$
Taking $\pi$ to be the optimal coupling with respect to the $d_2$ distance, and using \eqref{eq:boundG}, we conclude
$$
\begin{array}{l}
     \ds |v^\Omega_{(\mu_1,\nu_1)}(\omega_1,z)-v^\Omega_{(\mu_2,\nu_2)}(\omega_2,z)|^2\nonumber\\
   \qquad\le C\left(1+ |x|^2+ |z|^2+\sum_{i=1,2}\int |\alpha|^2\;d\mu_i\right)\big(|\omega_1-\omega_2|^2+d^2_2(\mu_1,\mu_2)\big).\nonumber
\end{array}
$$

For $v^\Theta$, apply the same argument as above to obtain a bound that also depends on the size of $|\alpha|$.
\end{proof}

\begin{lemma}\label{lem:growth}
    Let $(\mu,\nu)\in AC([0,T];\mathcal{P}((\R^{L+2})^K)\times \mathcal{P}(Q))$ a weak solution to \eqref{eq:dynamics}, then
    \begin{equation}\label{eq:growth}
        \int |\alpha|^2 d\mu_t \leq C\left(\int |\alpha|^2 d\mu_{\mathrm{in}} + t^2\right).
    \end{equation}
\end{lemma}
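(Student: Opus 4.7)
The plan is to reduce the statement to a pointwise-along-characteristics bound and then push forward. By the representation \eqref{eq:mildsolution}--\eqref{eq:diffeos} (which for a general weak solution to \eqref{eq:dynamics} follows from Theorem 8.2.1 of \cite{ambrosio2005gradient}, as invoked in the discussion preceding \cref{Stability PDE}), we may write $\mu_t = \pi_\theta \Phi^t_{(\mu,\nu)} \# \mu_{\mathrm{in}}$, where $\theta(t) = \pi_\theta \Phi^t_{(\mu,\nu)}(\theta_{\mathrm{in}}, \omega_{\mathrm{in}})$ is the unique Lipschitz solution to the $\theta$-characteristic $\dot\theta = V^\Theta_{(\mu,\nu)}(\theta)$ with $\theta(0) = \theta_{\mathrm{in}}$. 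Consequently
\begin{equation*}
\int |\alpha|^2\, d\mu_t = \int |\alpha(t, \theta_{\mathrm{in}})|^2\, d\mu_{\mathrm{in}}(\theta_{\mathrm{in}}),
\end{equation*}
so it suffices to control the $\alpha$-component of each individual trajectory.

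The key ingredient is the first case of \eqref{eq:infty2} in \cref{lem:auxV}: the $\alpha$-component (i.e.\ the $r=1$ component in each block) of the drift $V^\Theta$ is bounded uniformly by a constant $C$ depending only on $\|\sigma\|_{C^1}$, independently of $(\mu,\nu)$, $\theta$, and $\omega$. Thus, for $\mu_{\mathrm{in}}$-a.e.\ initial datum, the characteristic satisfies $\bigl|\tfrac{d}{dt}\alpha_j(t)\bigr| \le C$ for each $j=1,\ldots,K$, so $|\alpha(t)| \le |\alpha_{\mathrm{in}}| + C\sqrt{K}\, t$. Squaring and applying $(a+b)^2 \le 2a^2 + 2b^2$ gives the pointwise bound $|\alpha(t, \theta_{\mathrm{in}})|^2 \le 2|\alpha_{\mathrm{in}}|^2 + 2C^2 K\, t^2$. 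Integrating this against $\mu_{\mathrm{in}}$ and absorbing $K$ and $C$ into a new constant produces \eqref{eq:growth}.

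An alternative purely PDE-based route, which avoids any appeal to the superposition principle, is to test the weak formulation of the continuity equation for $\mu$ with the truncated functions $\varphi_R(\theta) = \min(|\alpha|^2, R^2)$, obtain
\begin{equation*}
\frac{d}{dt} \int \varphi_R\, d\mu_t = \int \nabla_\theta \varphi_R \cdot V^\Theta_{(\mu,\nu)}\, d\mu_t \le 2C\sqrt{K}\int |\alpha|\, d\mu_t \le 2C\sqrt{K}\left(\int |\alpha|^2 d\mu_t\right)^{1/2},
\end{equation*}
pass $R \to \infty$ by monotone convergence, and finish via the elementary observation that $f(t) := \int |\alpha|^2 d\mu_t$ satisfies $\tfrac{d}{dt}\sqrt{f} \le C\sqrt{K}$, yielding $\sqrt{f(t)} \le \sqrt{f(0)} + C\sqrt{K}\, t$.

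The only real subtlety is the standard measure-theoretic bookkeeping in either route (respectively: applying the superposition principle, or justifying passage to the limit in $R$), and both are completely routine because $V^\Theta_\alpha$ is globally bounded while the remaining components of $V^\Theta$ grow only linearly in $|\alpha|$ by \eqref{eq:infty2}, so the characteristic ODE is globally well-posed and the truncated test functions enjoy uniform integrability against $\mu_t$.
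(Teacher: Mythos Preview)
Your first route is correct and is exactly the paper's argument: the paper's entire proof reads ``By the bound $\|(V^\Theta_j)_1\|_\infty\leq C$, we conclude that $|\alpha(t,\theta_{\mathrm{in}})|\leq |\alpha_{\mathrm{in}}| + Ct$, which implies the desired bound,'' which is your characteristic-based pointwise estimate followed by integration against $\mu_{\mathrm{in}}$. Your alternative PDE-testing route is a valid but unnecessary embellishment.
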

\begin{proof}
By the bound $\|(V^\Theta_j)_1\|_\infty\leq C$, we conclude that $|\alpha(t,\theta_{\mathrm{in}})|\leq |\alpha_{\mathrm{in}}| + Ct$, which implies the desired bound.
\end{proof}
A key step in the proof of existence and uniqueness, Proposition~\ref{prop:existence} and Proposition~\ref{Stability PDE}, is the stability of the projected ODE flow.
\begin{lemma}\label{lem:stability1}
We consider $(\mu_1,\nu_1),$ $(\mu_2,\nu_2)\in AC([0,T];\mathcal{P}((\R^{L+2})^K)\times \mathcal{P}(Q))$ that satisfy the growth condition \eqref{eq:growth}. The associated flow maps \eqref{eq:diffeos} satisfy the bounds
\begin{eqnarray}\label{eq:stability1}
\ds |\Phi^t_{(\mu_1,\nu_1)}(\theta_1,\omega_1)-\Phi^t_{(\mu_2,\nu_2)}(\theta_2,\omega_2)|^2 \leq e^{C(\Lambda+|\alpha_1|+|\alpha_2|) t} e^{C t^2}|(\theta_1,\omega_1)-(\theta_2,\omega_2)|^2 \nonumber \\ 
\ds + C e^{C(\Lambda+|\alpha_1|+|\alpha_2|) t} e^{C t^2} \int_0^t C(r) d_2^2(\left(\mu_{1},\nu_{1}\right)(r),\left(\mu_{2},\nu_{2}\right)(r))dr, \nonumber
\end{eqnarray}
where 
\begin{align*}
    \Lambda &= 1 + \left(\int |\alpha_1|^2 d\mu_{1,\mathrm{in}}\right)^{1/2} + \left(\int |\alpha_2|^2 d\mu_{2,\mathrm{in}}\right)^{1/2};\\
    C(r)&=e^{-C|\alpha_!|r}e^{-C|\alpha_2|r} e^{-C\Lambda r} e^{-C r^2/2} (\Lambda +r + |\alpha_1| + |\alpha_2|)
\end{align*}
for some constant $C>0$ depending on $T$. 
\end{lemma}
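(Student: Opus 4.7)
The approach is a time-differential Gronwall estimate on the squared displacement between the two characteristic flows. Write $(\tilde\theta_i(t),\tilde\omega_i(t)):=\Phi^t_{(\mu_i,\nu_i)}(\theta_i,\omega_i)$ for $i=1,2$. Since $\tilde\theta_i$ evolves under the unconstrained vector field $V^\Theta_{(\mu_i,\nu_i)}$ while $\tilde\omega_i$ is driven by the projected field $\mbox{Proj}_{\pi_Q(\tilde\omega_i)}V^\Omega_{(\mu_i,\nu_i)}$, differentiating in time yields
$$
\tfrac12\tfrac{d}{dt}|\tilde\theta_1-\tilde\theta_2|^2 = \bigl\langle\tilde\theta_1-\tilde\theta_2,\;V^\Theta_{(\mu_1,\nu_1)}(\tilde\theta_1)-V^\Theta_{(\mu_2,\nu_2)}(\tilde\theta_2)\bigr\rangle
$$
and a similar identity for the $\omega$-component with the projected vector fields. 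The $\theta$-part is handled directly by the Lipschitz bound from Lemma~\ref{lem:auxV}.

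For the $\omega$-part I would first remove the projection. Since $Q$ is convex, Moreau's decomposition gives $V - \mbox{Proj}_{\pi_Q(\omega)}V \in N_Q(\omega)$, so that for any $\omega_1,\omega_2\in Q$ and vectors $V_1,V_2$,
$$
\bigl\langle \omega_1-\omega_2,\;\mbox{Proj}_{\pi_Q(\omega_1)}V_1-\mbox{Proj}_{\pi_Q(\omega_2)}V_2\bigr\rangle \le \bigl\langle \omega_1-\omega_2,\;V_1-V_2\bigr\rangle.
$$
This is the key monotonicity exploited by \cite{henry1973existence} and recalled in \cref{app}; it reduces the $\omega$-estimate to the unconstrained Lipschitz bound for $V^\Omega$ given in Lemma~\ref{lem:auxV}.

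Next I would invoke the four Lipschitz bounds of Lemma~\ref{lem:auxV} together with Cauchy--Schwarz and Young's inequality to split the $|(\tilde\theta,\tilde\omega)-(\tilde\theta',\tilde\omega')|\cdot d_2$ cross terms into a square and a $d_2^2$. The growth lemma \ref{lem:growth} ensures $A(\mu_1(r),\mu_2(r))\le C(\Lambda+r)$, and the componentwise bound $\|(V^\Theta_j)_1\|_\infty\le C$ yields $|\tilde\alpha_i(r)|\le |\alpha_i|+Cr$ along each characteristic. Combining these produces the differential inequality
$$
\tfrac{d}{dt}\bigl(|\tilde\theta_1-\tilde\theta_2|^2+|\tilde\omega_1-\tilde\omega_2|^2\bigr) \le g(t)\bigl(|\tilde\theta_1-\tilde\theta_2|^2+|\tilde\omega_1-\tilde\omega_2|^2\bigr) + C\,g(t)\,d_2^2\bigl((\mu_1,\nu_1)(t),(\mu_2,\nu_2)(t)\bigr),
$$
with $g(r)=C(\Lambda+|\alpha_1|+|\alpha_2|+r)$. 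The differential Gronwall inequality then gives the bound with prefactor $e^{\int_0^t g} = e^{C(\Lambda+|\alpha_1|+|\alpha_2|)t+Ct^2/2}$, and the integrand $C(r)$ stated in the lemma appears precisely as $e^{-\int_0^r g(u)\,du}(\Lambda+r+|\alpha_1|+|\alpha_2|)$ after factoring the full exponential outside.

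The main technical obstacle is the rigorous use of the monotonicity inequality at times when a trajectory touches $\partial Q$, where the driving vector field is discontinuous. This requires the differential-inclusion framework of \cref{app} to guarantee that $t\mapsto \tilde\omega_i(t)$ is absolutely continuous and satisfies the chain rule $\tfrac{d}{dt}|\tilde\omega_1-\tilde\omega_2|^2 = 2\langle \tilde\omega_1-\tilde\omega_2, \dot{\tilde\omega}_1-\dot{\tilde\omega}_2\rangle$ for a.e.\ $t$, so that Moreau's decomposition can be applied pointwise in time. Everything else is a careful bookkeeping of the Lipschitz constants from Lemma~\ref{lem:auxV} and the Gronwall exponent.
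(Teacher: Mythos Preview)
Your proposal is correct and follows essentially the same approach as the paper: differentiate the squared distance along the two characteristic flows, use the monotonicity of the projection (what the paper calls the ``non-expansiveness property'') to drop the projection in the $\omega$-component, apply the Lipschitz bounds of Lemma~\ref{lem:auxV} together with the linear-in-time growth of $|\alpha_i(t)|$ and of $A(\mu_1(t),\mu_2(t))$, and finish with Gronwall. The only cosmetic difference is that the paper treats the $\theta$- and $\omega$-components with separate Gronwall steps before combining, whereas you sum them first; both lead to the same exponent.
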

\begin{proof}
Recall the Lipschitz bounds on Lemma~\ref{lem:auxV} are given by
\begin{align*}
    C_\Theta(\theta_1,\theta_2) &= C \left(1+|\alpha_1|+|\alpha_2| + \left(\int |\alpha|^2d\mu_1\right)^{1/2} + \left(\int |\alpha|^2d\mu_2\right)^{1/2}\right),\\
    C_\Omega &= C\left(1 + \left( \int |\alpha|^2d\mu_1\right)^{1/2} + \left( \int |\alpha|^2d\mu_2\right)^{1/2}\right).
\end{align*}
By the bound $\|(V^\Theta_j)_1\|_\infty\leq C$, we conclude that 
$$
|\alpha(t,\theta_{\mathrm{in}})|\leq |\alpha_{\mathrm{in}}| + Ct.
$$
Combining this with the growth assumption \eqref{eq:growth}, we have
$$
C_\Theta(\theta_{1}(t),\theta_{2}(t))\leq C\left( C_\Theta(\theta_{1,\mathrm{in}},\theta_{2,\mathrm{in}}) + t\right)\quad\mbox{and}\quad C_\Omega(t)\leq C\left( C_\Omega(\mu_{1,\mathrm{in}},\mu_{2,\mathrm{in}}) + t\right).
$$

Taking the derivative of the distance, we find
\begin{eqnarray*}
&&\frac{1}{2}\frac{d}{dt} |\theta_1(t)-\theta_2(t)|^2 = \left\langle \theta_1(t)-\theta_2(t),V^\Theta_{(\mu_{t,1},\nu_{t,1})}(\theta_1(t)) - V^\Theta_{(\mu_{t,2},\nu_{t,2})}(\theta_2(t))\right\rangle\\
&&\qquad\leq C_\Theta(\theta_{1}(t),\theta_{2}(t))\left( |\theta_1(t) - \theta_2(t)|^2 + d_2(\mu_{1,t},\mu_{2,t})^2 + d_2(\nu_{1,t},\nu_{2,t})^2\right),\\
&&\frac{1}{2}\frac{d}{dt} |\omega_1(t)-\omega_2(t)|^2\\
&&\qquad= \left\langle \omega_1(t)-\omega_2(t),\mathrm{Proj}_{\pi(Q)}V^\Omega_{(\mu_{t,1},\nu_{t,1})}(\omega_{1}(t))-\mathrm{Proj}_{\pi(Q)}V^\Omega_{(\mu_{t,2},\nu_{t,2})}(\omega_{2}(t))\right\rangle\\
&&\qquad\le \left\langle \omega_1(t)-\omega_2(t),V_{(\mu_{t,1},\nu_{t,1})}^\Omega(\omega_{1}(t)) - V_{(\mu_{t,2},\nu_{t,2})}^\Omega(\omega_{2}(t))\right\rangle\\
&&\qquad\le C_\Omega(t)\left( |\omega_1(t) - \omega_2(t)|^2 + d_2(\mu_{1,t},\mu_{2,t})^2\right),
\end{eqnarray*}
where we have used the non-expansiveness property of the projection.

Let $\Lambda_{\Omega} = 1 + \left(\int |\alpha|^2 d\mu_{1,\mathrm{in}} \right)^{1/2} + \left(\int |\alpha|^2 d\mu_{2,\mathrm{in}}\right)^{1/2}$ and $\Lambda_{\Theta} = \Lambda_\Omega + |\alpha_1(0)| + |\alpha_2(0)|$. The estimates above can then be written as
\begin{eqnarray*}
\frac{d}{dt} |\theta_1(t)-\theta_2(t)|^2 &\leq& C(\Lambda_\Theta + t)\left( |\theta_1(t) - \theta_2(t)|^2 + d_2(\mu_{1,t},\mu_{2,t})^2 + d_2(\nu_{1,t},\nu_{2,t})^2\right),\\
\frac{d}{dt} |\omega_1(t)-\omega_2(t)|^2 &\leq& C(\Lambda_\Omega + t)\left( |\omega_1(t) - \omega_2(t)|^2 + d_2(\mu_{1,t},\mu_{2,t})^2\right),
\end{eqnarray*}
which by Gronwall's inequality implies that
\begin{eqnarray*}
&&|\theta_1(t)-\theta_2(t)|^2 \leq e^{C(\Lambda_{\Theta} t + t^2)}|\theta_{1,\mathrm{in}}-\theta_{2,\mathrm{in}}|^2\\
&&\qquad+ C \int_0^t e^{C(\Lambda_{\Theta} t + t^2 - \Lambda_{\Theta} r - r^2)}(\Lambda_\Theta + r) \left(d_2^2((\mu_{1},\nu_{1}),(\mu_{2},\nu_{2})\right) dr.\\
&&|\omega_1(t)-\omega_2(t)|^2 \leq e^{C(\Lambda_{\Omega} t + t^2/2)} |\omega_{1,\mathrm{in}}-\omega_{2,\mathrm{in}}|^2\\
&&\qquad+C\int_0^t e^{C(\Lambda_{\Omega} t + t^2/2 - \Lambda_{\Omega}r - r^2)}(\Lambda_\Omega + r) d_2^2(\mu_{1},\mu_{2}) dr.
\end{eqnarray*}
Putting both inequalities together, we arrive at the desired result.
\end{proof}

We now use this ODE estimate to prove Proposition~\ref{Stability PDE}.
\begin{proof}[Proof of Proposition~\ref{Stability PDE}]
Let
$$
d(t) = d_2^2(\left(\mu_{1},\nu_{1}\right)(t),\left(\mu_{2},\nu_{2}\right)(t)),
$$
and notice that for any coupling $\Pi_*$ between $\mu_{1,\mathrm{in}}\otimes \nu_{1,\mathrm{in}}$ and $\mu_{2,\mathrm{in}}\otimes \nu_{2,\mathrm{in}}$
$$
d(t)\le \int |(\theta_1(t),\omega_1(t))-(\theta_2(t),\omega_2(t))|^2d\Pi_*((\theta_{\mathrm{1,in}},\omega_{\mathrm{1,in}}),(\theta_{\mathrm{2,in}},\omega_{\mathrm{2,in}})),
$$
since the push-forward of $\Pi_*$ along the ODE flow at time $t$ is a coupling between $\mu_{1,t}\otimes \nu_{1,t}$ and $\mu_{2,t}\otimes \nu_{2,t}$. Using Lemma~\ref{lem:stability1}, we obtain that
\begin{eqnarray*}
&&d(t)\le\underbrace{\int e^{C(\Lambda+|\alpha_1|+|\alpha_2|) t} e^{C t^2}|(\theta_1,\omega_1)(0)-(\theta_2,\omega_2)(0)|^2 d\Pi_*}_{I}\\
&&\qquad+ C  \int_0^t d(r) \underbrace{\left(\int e^{C(\Lambda+|\alpha_1|+|\alpha_2|) (t-r)} e^{C (t^2-r^2)}(\Lambda +r + |\alpha_1| + |\alpha_2|)d\Pi_*\right)}_{II}dr .
\end{eqnarray*}

For $I$ we apply the Cauchy-Schwarz and Cauchy's inequality, and take $\Pi_*$ as the optimal coupling with respect to the $4$-Wasserstein to get the bound,
\begin{eqnarray*}
I&\le& e^{C(\Lambda t + t^2)} \left(\int e^{C|\alpha|t} d\mu_{1,\mathrm{in}}+\int e^{C|\alpha|t} d\mu_{2,\mathrm{in}}\right)^{1/2} \left(\int |(\theta_1,\omega_1)(0)-(\theta_2,\omega_2)(0)|^4 d\Pi_*\right)^{1/2}\\
&\le&e^{C(\Lambda t + t^2)} \left(\int e^{C|\alpha|t} d\mu_{1,\mathrm{in}}+\int e^{C|\alpha|t} d\mu_{2,\mathrm{in}}\right)^{1/2} d_4^2(\left(\mu_{\mathrm{in},1},\nu_{\mathrm{in},1}\right),\left(\mu_{\mathrm{in},2},\nu_{\mathrm{in},2}\right)).
\end{eqnarray*}

We bound II from above uniformly in $r$ by the Cauchy-Schwarz inequality,
$$
II \leq C\left(\int e^{C|\alpha|t} d\mu_{1,\mathrm{in}}+\int e^{C|\alpha|t} d\mu_{2,\mathrm{in}}\right)^{1/2}\left(\Lambda_\Omega + t\right).
$$
Therefore, we find that for every $t>0$
$$
d(t) \leq A(t) d_4^2(\left(\mu_{\mathrm{in},1},\nu_{\mathrm{in},1}\right),\left(\mu_{\mathrm{in},2},\nu_{\mathrm{in},2}\right)) + B(t)\int_0^t d(r) dr,
$$
where we define $B(t) = C A(t) (\Lambda + t)$ and
$$
A(t) = e^{C(\Lambda t + t^2)} \left(\int e^{C|\alpha|t} d\mu_{1,\mathrm{in}}+\int e^{C|\alpha|t} d\mu_{2,\mathrm{in}}\right)^{1/2}.
$$
Gronwall's inequality implies that for all $t\geq 0$
$$
d_2^2(\left(\mu_{t,1},\nu_{t,1}\right),\left(\mu_{t,2},\nu_{t,2}\right)) \leq A(t) e^{tB(t)}d_4^2(\left(\mu_{\mathrm{in},1},\nu_{\mathrm{in},1}\right),\left(\mu_{\mathrm{in},2},\nu_{\mathrm{in},2}\right)),
$$
using the monotonicity of $A(t)$ and $B(t)$.
\end{proof}


\section{Continuous time approximation of parameter dynamics}\label{Section Proof of Main Result}

\begin{proof}[Proof of \cref{Approximation of parameters}]
We consider the parameter training algorithm with learning rate $h>0$ and a single hidden layer of $N$ neurons for both the generator and the discriminator neural networks. We denote the parameter values at step $n$ by $(\theta_i^n,\omega_i^n)_{i=1,\ldots,N}$ and the parameter dynamics is
$$
\begin{cases}
    \theta_i^{n+1} = \theta_i^n + \frac{h}{N}\,v^\Theta_{\mu_N^n,\nu_N^n}(\theta_i^n,z_n)\\
    \omega_i^{n+1} = \mathrm{Proj}_Q(\omega_i^n + \frac{h}{N}\,v^\Omega_{\mu_N^n,\nu_N^n}(\omega_i^n,z_n,x_n)),
\end{cases}
$$
where at each step we sample $x_n\sim P_*$ and $z_n\sim\mathcal{N}$ independently, $\mu_N^n$ denotes the empirical measure associated to $\theta_1^n,\ldots,\theta_N^n$ and $\nu_N^n$ the empirical measure associated to $\omega_1^n,\ldots,\omega_N^n$. The parameters are assumed to be initialized by independently  $(\theta_i^0,\omega_i^0)$ by sampling $\mu_{in}\otimes\nu_{in}$. The linear interpolation of the parameters to a continuous time variable $t>0$ with time step $\Delta t = h/N$ will be denoted by $(\theta_i,\omega_i)$, where we let $\theta_i(t_n) = \theta_i^n$ and $\omega_i(t_n) = \omega_i^n$, with $t_n = n \Delta t = n h/N$. We let $\mu$ and $\nu$ be the empirical measures associated to $\theta_1,\ldots,\theta_N$ and $\omega_1,\ldots,\omega_N$. We suppress the dependence on $N$ of the measures for notational simplicity.

We consider the mean field ODE system defined by the expectation of the vector fields over $z$ and $x$
$$
\begin{cases}
    \frac{d}{dt}\hat{\theta}_i = V^\Theta_{(\hat\mu,\hat\nu)}(\hat{\theta_i})\\
    \frac{d}{dt}\hat{\omega}_i = \mathrm{Proj}_{\pi_Q(\omega_i)}V^\Omega_{(\hat\mu,\hat\nu)}(\hat{\omega_i}),
\end{cases}
$$
where $\hat{\mu}$ and $\hat{\nu}$ are the empirical measures associated to $\hat\theta_1,\ldots,\hat\theta_N$ and $\hat\omega_1,\ldots,\hat\omega_N$, respectively, and the initial conditions are coupled to the parameter training by $\hat{\theta}_i(0)=\theta_{i}^0$ and $\hat{\omega}_i(0)=\omega_{i}^0$. More clearly, the probability measures $\hat{\mu}$ and $\hat{\nu}$ are the solutions of the PDE \eqref{eq:dynamics} with random initial conditions chosen as $(\hat\mu(0),\hat\nu(0)) = (\mu_N(0),\nu_N(0))$.

To simplify the arguments, we first consider the distance between mean field ODE system and the discrete projected forward Euler algorithm
$$
\begin{cases}
    \hat{\theta}_i^{n+1}= \hat{\theta}_i^{n}+\Delta t\, V^\Theta_{(\hat\mu^n,\hat\nu^n)}(\hat{\theta_i})\\
    \hat{\omega}^{n+1}_i = \mathrm{Proj}_{Q}\left(\hat{\omega}^{n}_i+\Delta t\, V^\Omega_{(\hat\mu^n,\hat\nu^n)}(\hat{\omega}^n_i)\right)
\end{cases},
$$
where we let $T>0$ be a fixed time horizon and consider $\Delta t = h/N$, where $h>0$ is the user defined learning rate. To estimate the difference between the continuum and the discrete approximation, we can use a similar argument to \cref{thm:approx}, taking into consideration the bound on the Lipschitz constant of the vector fields given by Lemma~\ref{lem:auxV}. We can obtain the bound
$$
\E\left[\frac{1}{N}\sum_{i=1}^N|\hat{\theta}_i^{\Delta t}-\hat{\theta}|^2\right]\le \Delta t C\left(1+\E_{\mu^{in}}\left[e^{Ce^{C|\alpha|}}\right]\right).
$$
The argument is simpler than the argument below, so we skip it to avoid burdensome repetition.

We define 
$$
e^n_i=|\hat{\theta}^n_i-\theta^n_i|^2+|\hat{\omega}^n_i-\omega^n_i|^2\qquad\mbox{and}\qquad e^n=\frac{1}{N}\sum_{i=1}^N e^n_i,
$$
and notice the inequality
$$
d_2^2((\mu^n,\nu^n),(\hat{\mu}^n,\hat{\nu}^n))\le e^n.
$$
Using a step in either algorithm
$$
\begin{array}{rcl}
  \ds e^{n+1}_i&\ds=&|\hat{\theta}^n_i+\Delta t V^\Theta_{(\hat\mu^n,\hat\nu^n)}(\hat{\theta}^n_i)-(\theta^n_i+\Delta t v^\Theta_{(\mu^n,\nu^n)}(\theta^n_i))|^2\\
  &&\ds\qquad+| \mathrm{Proj}_{Q}(\hat{\omega}^{n}_i+\Delta t V^\Omega_{(\hat\mu^n,\hat\nu^n)}(\hat{\omega}^n_i))- \mathrm{Proj}_{Q}(\omega^{n}_i+\Delta t v^\Omega_{(\mu^n,\nu^n)}(\omega^n_i))|^2.
\end{array}
$$
Using that the projection is contractive, expanding the square and bounding  we obtain
$$
e_i^{n+1}\le e_i^n+\Delta t(A_i^n+B_i^n)  +(\Delta t)^2 C_i^n,
$$
where
\begin{eqnarray*}
A_i^n&=&-2\langle\hat{\theta}^n_i-\theta^n_i,V^\Theta_{(\hat\mu^n,\hat\nu^n)}(\hat{\theta}^n_i)-V^\Theta_{(\mu^n,\nu^n)}(\theta^n_i)\rangle
+\langle\hat{\omega}^n_i-\omega^n_i,V^\Omega_{(\hat\mu^n,\hat\nu^n)}(\hat{\omega}^n_i)-V^\Omega_{(\mu^n,\nu^n)}(\omega^n_i)\rangle,\\
B_i^n&=&-2\langle\hat{\theta}^n_i-\theta^n_i,V^\Theta_{(\hat{\mu}^n,\hat{\nu}^n)}(\theta^n_i)-v^\Theta_{(\mu^n,\nu^n)}(\theta^n_i,z_n)\rangle
\\
&&\qquad+2\langle\hat{\omega}^n_i-\omega^n_i,V^\Omega_{(\hat{\mu}^n,\hat{\nu}^n)}(\omega^n_i)-v^\Omega_{(\mu^n,\nu^n)}(\omega^n_i,z_n,x_n)\rangle,
\end{eqnarray*}
and
$$
C_i^n=2\left(|V^\Theta_{(\hat\mu^n,\hat\nu^n)}(\hat{\theta}^n_i)|^2+|v^\Theta_{(\mu^n,\nu^n)}(\theta^n_i)|^2+|V^\Omega_{(\hat\mu^n,\hat\nu^n)}(\hat{\omega}^n_i)|^2+|v^\Omega_{(\mu^n,\nu^n)}(\omega^n_i)|^2\right).
$$
Using the bounds Lemma~\ref{lem:auxV}, we get the growth bound
$$
|\alpha^n_i|,\,|\hat{\alpha}^n_i|\le |\alpha_{i,in}|+C n \Delta t,
$$
and the estimates for $n\Delta t<T$
$$
A^n_i\le K_i (e^n_i+e^n)\qquad\mbox{and}\qquad C^n_i\le (1+|x^n|^2+|z_n|^2)K^2_i,
$$
where
$$
K_i=C\left(1+\left(\frac{1}{N}\sum_j|\alpha_{j,in}|^2\right)^{1/2}+|\alpha_{i,in}|\right).
$$
Using $e_0=0$ and a telescopic sum, we get
\begin{eqnarray*}
    e_{i}^{n+1}&\le& \Delta t K_i \sum_{r=0}^n (e^r_i+e^r)+\Delta t \sum_{r=0}^n B_i^r+ \Delta t K_i^2 \sum_{r=0}^n(1+|x^r|^2+|z_r|^2).
\end{eqnarray*}
Next, we will take the conditional expectation with respect to the variables $\{\alpha_{j,in}\}$. To this end, we notice the bound
\begin{eqnarray*}
    \E\left[\left. \sum_{r=0}^n B_i^r \right|\{\alpha_{j,in}\}\right]&\le& \E\left[\left. \left|\sum_{r=0}^n B_i^r \right|^2\right|\{\alpha_{j,in}\}\right]^{1/2}\\
    &=& \left(\sum_{r=0}^n\E[|B_i^r|^2||\{\alpha_{j,in}\}]+2\sum_{r_1=0}^n\sum_{r_2=r_1+1}^n\E[B_i^{r_1}B_i^{r_2}|\{\alpha_{j,in}\}]\right)^{1/2}\\
    &\le &\left(K_i^2\sum_{r=0}^n\E[e_i^r+e^r|\{\alpha_{j,in}\}]\right)^{1/2}\\
    &\le & K_i\left(1+ \sum_{r=0}^n\E[e_i^r+e^r|\{\alpha_{j,in}\}]\right),
\end{eqnarray*}
where we have used that by Lemma~\ref{lem:auxV}
$$
|B_i^r|^2\le K_i^2 (e^r_i+e^r)
$$
and that
$$
\E[B_i^{r_1}B_i^{r_2}|\{\alpha_{j,in}\}]=0,
$$
which follows by using the law of iterated expectation with the sigma algebra $\mathcal{F}^{r_2}$ generated by  $\{(\theta^i_{in},\omega^{i}_{in}\}_{i=1}^N$, $\{x^r\}_{r=0}^{r_2-1}$ and $\{z^r\}_{r=0}^{r_2-1}$. Namely,
$$
\E[B_i^{r_1}B_i^{r_2}|\{\alpha_{j,in}\}]=\E[B_i^{r_1}\E[B_i^{r_2}|\mathcal{F}^{r_2}]|\{\alpha_{j,in}\}],
$$
where we have used that each $B_i^{r_1}$ is a measure with respect to $\mathcal{F}^{r_2}$ as $r_1<r_2$. Finally, using that $z^{r_2}$ and $x^{r_2}$ are independent with respect $\mathcal{F}^{r_2}$, and that $(\hat{\theta}_i^{r_2-1},\hat{\omega}^{r_2-1}_i)$ and $(\theta_i^{r_2-1},\omega^{r_2-1}_i)$ are measurable with respect $\mathcal{F}^{r_2}$, we have
\begin{eqnarray*}
\E[B_i^{r_2}|\mathcal{F}^{r_2}]&&=-2\E_{z^{r_2}}[\langle\hat{\theta}^{r_2-1}_i-\theta^{r_2-1}_i,V^\Theta_{(\hat{\mu}^{r_2-1},\hat{\nu}^{r_2-1})}(\theta^{r_2-1}_i)-v^\Theta_{(\mu^{r_2-1},\nu^{r_2-1})}(\theta^{r_2-1}_i,z^{r_2})\rangle]\\
&&+2\E_{z^{r_2},x^{r_2}}[\langle\hat{\omega}^{r_2-1}_i-\omega^{r_2-1}_i,V^\Omega_{(\hat{\mu}^{r_2-1},\hat{\nu}^{r_2-1})}(\omega^n_i)-v^\Omega_{(\mu^{r_2-1},\nu^{r_2-1})}(\omega^{r_2-1}_i,z^{r_2},x^{r_2})\rangle]\\
&&=0.    
\end{eqnarray*}

Using the previous bound, that the distributions for $x^r$ and $z^r$ have finite second moments, and that $K_i$ is a deterministic function of $\{\alpha_{j,in}\}$, we obtain up to a change of constant
\begin{eqnarray*}
\E[e_{i}^{n+1}|\{\alpha_{j,in}\}]\le \Delta t K_i\sum_{r=0}^n \E[e^r_i+e^r|\{\alpha_{j,in}\}]+K^2_i\Delta t.
\end{eqnarray*}
Applying a discrete version of Gromwal's inequality, we have
\begin{eqnarray*}
\E[e_{i}^{n+1}|\{\alpha_{j,in}\}]\le  \Delta t K_i e^{TK_i} \sum_{r=0}^{n} \E[e^r|\{\alpha_{j,in}\}]+\Delta t K^2_ie^{TK_i}.
\end{eqnarray*}
Summing over $i$, we obtain
\begin{eqnarray*}
\E[e^{n+1}|\{\alpha_{j,in}\}]\le  \Delta t K \sum_{r=0}^{n} \E[e^r|\{\alpha_{j,in}\}]+\Delta t \frac{1}{N}\sum_{i=1}^N K^2_ie^{TK_i},
\end{eqnarray*}
where
$$
K=\frac{1}{N}\sum_{i=1}^N K_ie^{TK_i}.
$$
Using discrete Gromwall's inequality one last time we have the estimate
\begin{equation}
\E[e^{n+1}|\{\alpha_{j,in}\}]\le \Delta t e^{TK}\frac{1}{N}\sum_{i=1}^N K^2_ie^{TK_i} .    
\end{equation}
Taking expectation, we can bound
$$
\E[e^{n+1}]\le \Delta t \left(\E e^{2TK}+\frac{1}{N}\sum_{i=1}^N K_i^4+\frac{1}{N}\sum_{i=1}^N e^{2TK_i}\right).
$$
Hence, up to changing constants we have the bound 
\begin{eqnarray}
\E[e^{n+1}]&\le& \Delta t C\left(1+ \E \left[e^{C\frac{1}{N}\sum_{i=1}^Ne^{C|\alpha_i|}}\right]\right)\nonumber\\
&=&\Delta t C\left(1+\E_{\mu^{in}}\left[e^{\frac{C}{N}e^{C|\alpha|}}\right]^{N}\right)\nonumber\\
&\le& \Delta t C\left(1+\E_{\mu^{in}}\left[e^{Ce^{C|\alpha|}}\right]\right)\le C \Delta t= C \frac{h}{N}.
\end{eqnarray}
The desired bound \eqref{eq:nocurseofdimensionality} follows from using the bound \eqref{eq:expmoments} to show that the right hand side above is finite.
\end{proof}

\section{Conclusions and future directions}\label{sec:conclusion}
We showed rigorously and quantitatively that the Wasserstein-GAN algorithm is a stochatic discretization of the well-posed PDE system given by \eqref{eq:dynamics}. Here, we use the insight gained from the dynamics to explain some of the pitfalls of W-GAN \cite{arjovsky2017wasserstein} that help explain why is the algorithm finicky to converge. We center in two salient points: the discontinuity of the vector field for the parameters of the discriminator network  and the long time behavior of the mean field dynamics.

We noticed that the clipping of the parameters induces that the dynamics are given by a discontinuous vector field, which forces the dynamics into a box $Q$. In essence, the parameters of the discriminator move within the box $Q$ without anticipating its boundary and crash into $\partial Q$. This is akin to birds flying into a window. This produces blow-up of the distribution of discriminator parameters in finite time. Still, the measure valued solution is well defined for all times $t>0$. Most noticeably, for this solution once the dimension of the support of the measure is reduced, it will never fatten back up. In an extreme case, the dynamics can lead to the distribution of the discriminator parameters being $\nu(t)=\delta_{\omega(t)}$ for any $t>t_*$.

In the follow up work \cite{gulrajani2017improved}, finite time blow-up was already observed in toy numerical examples. \cite{gulrajani2017improved} improves the original W-GAN algorithm by enforcing 1-Lipschitz condition with a penalization. With respect to the underlying energy functional, this is equivalent for the mean field dynamics to considering
\begin{eqnarray*}
    E[\mu,\nu]&=& \int_{\R^L} D_\nu( G_\mu(z)) \;d\mathcal{N}(z)-\int_{\R^K} D_\nu(x)\;dP_*(x)\\
    &&\qquad+\lambda \int_0^1\int_{\R^L}\int_{\R^K} \left||\nabla D_\nu((1-s)G_\mu(z)+sx)|-1\right|^2\;dP_*(x)d\mathcal{N}(z)ds,
\end{eqnarray*}
with $\lambda$ being a user chosen penalization parameter. The evolution of the mean field limit can be formally characterized as the gradient descent of $E$ on $\mu$ and gradient ascent on $\nu$. In terms of equations we consider
\begin{equation*}
\begin{cases}
    \partial_t \mu-\nabla_\theta\cdot\left(\mu\; \nabla_\theta \frac{\delta E}{\delta \mu}[\mu,\nu]\right)=0,\\
    \partial_t \nu+ \gamma_c \nabla_\omega\cdot\left(\nu\;\nabla_\omega \frac{\delta E}{\delta \nu}[\mu,\nu]\right)=0,\\
    \mu(0)=\mu_{in}, \qquad \nu(0)=\nu_{in}.
\end{cases}
\end{equation*}
Understanding, the difference in the dynamics for these improved algorithms is an interesting open problem.

For the long time behavior of the dynamics \eqref{eq:dynamics}, we refer to Section~\ref{sec:modecollapse} for intuition where we show in a toy example of ODEs that for any initial conditions the dynamics stabilize to a limiting periodic orbit. Generalizing this to absolutely continuous initial data is quite complicated, we mention the recent work for the Euler equation, in \cite{hassainia2023rigorous} the authors construct vortex patches that replicate the motion of leapfrogging vortex points. Moreover, for the general system, we expect that the dynamics will always converge to some limiting periodic orbit. Showing this rigorously is a challenging PDE problem.

In terms of the curse of dimensionality exhibited in Corollary~\ref{corollary}, an alternative would be to quantify the convergence of the algorithm in a Reproducing Kernel Hilbert Space (RKHS). In PDE terms, this would mean to show well posedness of the PDE in a negative Sobolev space like $H^{-s}$ with $s>d/2$.

\section*{Acknowledgments}
We would like to acknowledge Justin Sirignano, Yao Yao and Federico Camara Halac for useful conversations at the beginning of this project. MGD would like to thank the Isaac Newton Institute for Mathematical Sciences, Cambridge, for support and hospitality during the program \textit{Frontiers in Kinetic Theory} where part of the work on this paper was undertaken. This work was supported by EPSRC grant no EP/R014604/1. The research of MGD was partially supported by NSF-DMS-2205937 and NSF-DMS RTG 1840314. The research of RC was partially supported by NSF-DMS RTG 1840314.
\appendix
\section{}\label{app}
 Following the ideas of \cite{henry1973existence}, in this section we prove the existence, uniqueness and stability to a class of ODEs with discontinuous forcing given by a projection. We also show quantitative convergence of the projected forward Euler algorithm, for which we could not find a good reference for.

 Before we present the main result, we introduce some notation that we need. For any closed convex subset $Q\subset\R^d$ and $x\in\R^d$ there exists an unique $\mathrm{Proj}_Q x\in Q$ such that
$$
\|\mathrm{Proj}_Q x-x\| = \min_{q\in Q}\|q-x\|.
$$
The map $\mathrm{Proj}_Q$ is non-expansive, which means that for all $x,\,y\in\R^d$:
$$
\|\mathrm{Proj}_Q(x)-\mathrm{Proj}_Q(y)\| \leq \|x-y\|.
$$
We denote by $\pi_Q(x)\subset\R^d$ the tangent cone of $Q$ at $x\in Q$,
$$
\pi_Q(x) = \overline{\{v\in\R^d\,|\, \exists\, \epsilon>0,\ \ x + \epsilon v\in Q \} } = \left\{v\in\R^d\,\left|\, \lim_{h\to 0^+}\frac{d(x+hv,Q)}{h} = 0\right.\right\},
$$
which is a closed convex cone. The map $\mathrm{Proj}_{\pi_Q(x)}:\R^d\to\R^d$ denotes the projection onto $\pi_Q(x)\subset\R^d$.  We notice that for a smooth vector field $V(x):Q\to \R^d$, the mapping $x\in\R^n\mapsto \mathrm{Proj}_{\pi_Q(x)}(V(x))$ is discontinuous at points $x$ such that $V(x)\notin\pi_Q(x)$. 

\begin{theorem}[\cite{henry1973existence}]\label{existence}
Let $Q\subset\R^d$ be a closed and convex subset of $\R^d$ and $V:\R^d\to\R^d$ a $C^1$ vector field, which satisfies that there exists $C>0$, such that $|V(x)|, |\nabla V(x)|\le C$. Then, for any initial condition $x_{in}\in Q$ there exists a unique absolutely continuous curve $x:[0,\infty)\to Q$ such that
\begin{equation}
\label{Projected ODE}
\begin{cases}
    \dot{x} = \mathrm{Proj}_{\pi_Q(x)}V(x),\\
    x(0)=x_{in},
\end{cases}
\end{equation}
with the equality satisfied for almost every $t$. Moreover, the solutions are also stable with respect to the initial condition $x_{in}$:
$$
\|x_1(t)-x_2(t)\|\leq e^{\|\nabla V\|_{\infty}\,t}\|x_1(0)-x_2(0)\|,
$$
where $x_1$ and $x_2$ are two solution to \eqref{Projected ODE}.
\end{theorem}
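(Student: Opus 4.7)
The plan is to combine a Moreau--Yosida type penalization argument, essentially the strategy of \cite{henry1973existence}, with a monotonicity estimate coming from the convexity of $Q$. First, to obtain existence I would replace \eqref{Projected ODE} by the regularized ODE
$$
\dot x_\epsilon = V(x_\epsilon) - \frac{1}{\epsilon}\bigl(x_\epsilon - \mathrm{Proj}_Q x_\epsilon\bigr),\qquad x_\epsilon(0)=x_{in},
$$
whose right-hand side is globally Lipschitz in $x_\epsilon$ since $\mathrm{Proj}_Q$ is nonexpansive and $V$ is $C^1$ with bounded derivative. Classical Picard--Lindel\"of theory then yields a unique global solution $x_\epsilon$. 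Writing $d_\epsilon(t)=\tfrac12|x_\epsilon(t)-\mathrm{Proj}_Q x_\epsilon(t)|^2$ and using $\|V\|_\infty\le C$ together with the nonexpansiveness of the projection, a short computation gives $d_\epsilon(t)\le C\epsilon$ uniformly on compact time intervals, so $x_\epsilon$ approaches $Q$ as $\epsilon\to 0$.

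Since $|\dot x_\epsilon|$ is uniformly bounded on $[0,T]$, Arzel\`a--Ascoli yields a subsequence $x_\epsilon\to x$ uniformly on $[0,T]$, with $x(t)\in Q$ for all $t$ and $x$ Lipschitz, hence absolutely continuous. To identify $x$ as a solution of \eqref{Projected ODE}, one observes that by construction
$$
\dot x_\epsilon(t)-V(x_\epsilon(t))=-\tfrac{1}{\epsilon}\bigl(x_\epsilon(t)-\mathrm{Proj}_Q x_\epsilon(t)\bigr)\in -N_Q(\mathrm{Proj}_Q x_\epsilon(t)),
$$
where $N_Q$ is the normal cone, so passing to weak limits in $L^2(0,T;\R^d)$ and using closedness of the graph of $N_Q$ for closed convex $Q$ yields the differential inclusion $\dot x(t)\in V(x(t))-N_Q(x(t))$ for a.e.\ $t$. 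Because $x(t)\in Q$ the velocity $\dot x(t)$ must lie in the tangent cone $\pi_Q(x(t))$, and the orthogonal decomposition $V(x)=\mathrm{Proj}_{\pi_Q(x)}V(x)+n$ with $n\in N_Q(x)$ is unique in the Hilbert sense, so $\dot x(t)=\mathrm{Proj}_{\pi_Q(x(t))}V(x(t))$ for a.e.\ $t$.

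Uniqueness and stability then follow from a single convex-analysis inequality. For any two solutions $x_1,x_2$ of \eqref{Projected ODE}, the differential inclusion above gives $\dot x_i(t)=V(x_i(t))-n_i(t)$ with $n_i(t)\in N_Q(x_i(t))$. The defining property of the normal cone applied to the points $x_1(t),x_2(t)\in Q$ yields $\langle n_1(t),x_2(t)-x_1(t)\rangle\le 0$ and $\langle n_2(t),x_1(t)-x_2(t)\rangle\le 0$, whence
$$
\tfrac12\tfrac{d}{dt}|x_1(t)-x_2(t)|^2\le \langle V(x_1(t))-V(x_2(t)),\,x_1(t)-x_2(t)\rangle\le\|\nabla V\|_\infty|x_1(t)-x_2(t)|^2.
$$
Gronwall's lemma produces the asserted exponential estimate, and in particular forces uniqueness when $x_1(0)=x_2(0)$.

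The main obstacle is the identification step: one must rigorously show that the limiting velocity $\dot x(t)$ is the projection of $V(x(t))$ onto the tangent cone rather than merely lying in $V(x(t))-N_Q(x(t))$. The key mechanism is that $x$ remains in $Q$ and is absolutely continuous, so $\dot x(t)\in \pi_Q(x(t))$ at almost every $t$, combined with uniqueness of the tangent-plus-normal decomposition of $V(x(t))$. This step is precisely where the convexity of $Q$ is essential and is the content of \cite{henry1973existence}; the rest of the proof is then standard.
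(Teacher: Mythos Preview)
Your uniqueness and stability argument is essentially identical to the paper's: both write $\dot x_i=V(x_i)-n_i$ with $n_i\in N_Q(x_i)$, use the defining inequality of the normal cone to drop the $n_i$ terms, and apply Gronwall.

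For existence you take a genuinely different route. The paper builds approximants via the projected forward Euler scheme \eqref{PEuler} (which it needs anyway for \cref{thm:approx}), shows the discrete velocities lie in the auxiliary compact convex set
\[
\mathcal{V}(x)=\{V(x)-n:\ n\in N_Q(x),\ \|n\|^2\le\langle V(x),n\rangle\}
\]
up to an $O(\Delta t)$ error, passes to the limit with Arzel\`a--Ascoli and Mazur's lemma, and then identifies $\dot x=\mathrm{Proj}_{\pi_Q(x)}V(x)$ via the sharp fact $\mathcal{V}(x)\cap\pi_Q(x)=\{\mathrm{Proj}_{\pi_Q(x)}V(x)\}$. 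The extra constraint $\|n\|^2\le\langle V(x),n\rangle$ is exactly what forces the decomposition to be orthogonal. Your Moreau--Yosida penalization is a perfectly standard alternative in the maximal monotone literature, but note that it does not deliver the discrete convergence result \cref{thm:approx} that the paper uses later.

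There is, however, a real gap in your identification step. You invoke ``uniqueness of the orthogonal decomposition $V(x)=\mathrm{Proj}_{\pi_Q(x)}V(x)+n$ in the Hilbert sense'', but Moreau's theorem only gives uniqueness of the decomposition $V=w+n$ with $w\in\pi_Q(x)$, $n\in N_Q(x)$ \emph{and} $\langle w,n\rangle=0$. Without orthogonality the splitting is not unique: take $Q=[0,\infty)$, $x=0$, $V=1$; then $1=2+(-1)$ with $2\in\pi_Q(0)$ and $-1\in N_Q(0)$. So from $\dot x\in\pi_Q(x)$ and $V(x)-\dot x\in N_Q(x)$ alone you cannot conclude $\dot x=\mathrm{Proj}_{\pi_Q(x)}V(x)$. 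To close this you must either show that the Yosida limit selects the element of minimal norm in $V(x)-N_Q(x)$ (this is Brezis's nonlinear semigroup theory, not a one-liner), or carry an additional inequality through the limit---which is precisely what the paper's constraint $\|n\|^2\le\langle V(x),n\rangle$ accomplishes. A smaller point: your stated bound $d_\epsilon\le C\epsilon$ on the \emph{squared} distance is too weak to yield a uniform bound on $|\dot x_\epsilon|=|V(x_\epsilon)-\tfrac1\epsilon(x_\epsilon-\mathrm{Proj}_Q x_\epsilon)|$; the computation actually gives $|x_\epsilon-\mathrm{Proj}_Q x_\epsilon|\le C\epsilon$, i.e.\ $d_\epsilon\le C\epsilon^2$, which is what you need.
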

Moreover, we can approximate these solutions by a projected forward Euler algorithm.
\begin{theorem}\label{thm:approx}
Let $x_{\Delta t}:[0,\infty)\to Q$ be the linear interpolation at times $n\Delta t$ of $\{x^{n}_{\Delta t}\}$ defined by the projected Euler algorithm
\begin{equation}\label{PEuler}
\begin{cases}
    x^{n+1}_{\Delta t}=\mbox{Proj}_Q( x^{n}_{\Delta t}+\Delta t\, V( x^{n}_{\Delta t}))\\
    x^0_{\Delta t}=x_{in}.
\end{cases}
\end{equation}
Then, for any time horizon $T>0$, as $\Delta t\to 0$ we have 
$$
\|x_{\Delta t}-x\|\le e^{(1+\|\nabla V\|_{\infty})t} \left(2(\Delta t)^{1/2} \|V\|_{\infty}+\Delta t\|V\|_{\infty}\|\nabla V\|_{\infty}\right),
$$
where $x$ is the unique solution of \eqref{Projected ODE}.
\end{theorem}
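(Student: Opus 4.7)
The plan is to compare the Euler interpolant $x_{\Delta t}$ with the exact flow $x$ through a continuous-time Gronwall inequality for $d(t):=|x(t)-x_{\Delta t}(t)|^2$; the claimed $\sqrt{\Delta t}$ rate will emerge by taking square roots of an $O(\Delta t)$ bound on $d(t)$. The crucial step is to rewrite both trajectories on a subinterval $(t_n,t_{n+1})$ in a ``vector field minus outward constraint'' form. By Moreau's polar decomposition along $\pi_Q(x)$ and its polar cone, the exact flow satisfies $\dot x(t) = V(x(t)) - \lambda(t)$ with $\lambda(t)$ in the outward normal cone at $x(t)$, i.e.\ $\langle \lambda(t),q-x(t)\rangle\le 0$ for all $q\in Q$. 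The interpolant in turn satisfies $\dot x_{\Delta t}(t) = V(x_{\Delta t}^n) - p_n/\Delta t$ on $(t_n,t_{n+1})$, where $p_n:= x^n_{\Delta t} + \Delta t\,V(x^n_{\Delta t}) - x^{n+1}_{\Delta t}$ has norm at most $\Delta t\|V\|_\infty$ (it is the distance from the Euler pre-step to $Q$) and, by the variational characterization of $\mathrm{Proj}_Q$, obeys $\langle p_n,q-x^{n+1}_{\Delta t}\rangle\le 0$ for every $q\in Q$.

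Differentiating $d(t)$ then gives three terms:
\[
\tfrac{1}{2}\dot d(t) = \langle x-x_{\Delta t},\, V(x)-V(x_{\Delta t}^n)\rangle -\langle x-x_{\Delta t},\lambda(t)\rangle + \langle x-x_{\Delta t},\, p_n/\Delta t\rangle.
\]
Since the linear interpolation of points of $Q$ remains in $Q$, we have $x_{\Delta t}(t)\in Q$, so the normal-cone inequality for $\lambda$ forces the second term to be nonpositive. For the projection residual I split $x-x_{\Delta t} = (x-x^{n+1}_{\Delta t}) + (x^{n+1}_{\Delta t}-x_{\Delta t})$: the first piece pairs nonpositively with $p_n$ (using $x(t)\in Q$ in the variational inequality above), while the second equals $(1-s)(x^{n+1}_{\Delta t}-x^n_{\Delta t})$ with norm $\le \Delta t\|V\|_\infty$. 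This yields $\langle x-x_{\Delta t},p_n\rangle\le \Delta t^2\|V\|_\infty^2$ and hence only an $O(\Delta t\|V\|_\infty^2)$ contribution to $\dot d$. The Lipschitz term is controlled via $|x-x_{\Delta t}^n|\le |x-x_{\Delta t}|+\Delta t\|V\|_\infty$ followed by Young's inequality, producing a multiple of $d(t)$ plus an $O(\Delta t^2\|V\|_\infty^2\|\nabla V\|_\infty^2)$ additive constant.

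Assembling these bounds yields a differential inequality of the form $\dot d(t)\le (1+2\|\nabla V\|_\infty)\,d(t) + C\,\Delta t$ with $C = C(\|V\|_\infty,\|\nabla V\|_\infty)$, and Gronwall with $d(0)=0$ delivers $d(t)\le C\Delta t\,e^{(1+2\|\nabla V\|_\infty)t}$; taking square roots reproduces the claimed estimate up to a minor reshuffling of constants. The main obstacle is precisely the projection residual $p_n/\Delta t$: even though $|p_n/\Delta t|$ is only $O(\|V\|_\infty)$, a naive Cauchy-Schwarz on $\langle x-x_{\Delta t},p_n/\Delta t\rangle$ would introduce a term of size $\|V\|_\infty|x-x_{\Delta t}|$ that Gronwall cannot turn into a vanishing rate. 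It is the combined use of the convexity of $Q$ (keeping both trajectories inside $Q$) and the variational characterization of $\mathrm{Proj}_Q$ (placing $p_n$ in the outward normal cone at $x^{n+1}_{\Delta t}$) that rescues the argument and recovers the $\sqrt{\Delta t}$ rate.
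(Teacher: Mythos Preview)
Your argument is correct and follows essentially the same route as the paper: differentiate $|x-x_{\Delta t}|^2$, write both velocities as $V$ minus a normal-cone vector, use the normal-cone variational inequalities (with $x_{\Delta t}(t)\in Q$ and $x(t)\in Q$) to discard the sign-definite terms, and control the $O(\Delta t)$ remainders before applying Gronwall. The only cosmetic difference is that the paper packages the discrete velocity through the relaxed multifunction $\mathcal{V}(x)=\{V(x)-n:\ n\in N_Q(x),\ |n|^2\le V(x)\cdot n\}$, first showing $(x_{\Delta t},\dot x_{\Delta t})\in\mathrm{Graph}(\mathcal{V})+\Delta t\,B$ and then invoking the ``contraction property'' of $\mathcal{V}$, whereas you work directly with the projection residual $p_n=x^n_{\Delta t}+\Delta t V(x^n_{\Delta t})-x^{n+1}_{\Delta t}\in N_Q(x^{n+1}_{\Delta t})$; the underlying inequalities are identical and the resulting constants differ only in the harmless way you noted.
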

\begin{proof}[Proof of \cref{existence} and \cref{thm:approx}]
For each $x\in Q$, we define the normal cone $N_Q(x)$ as
$$
N_Q(x) = \{ n\in\R^d\,|\, \forall q\in Q:\ \ \langle n,q-x\rangle\leq 0\},
$$
or equivalently the set of vectors $n\in\R^d$ such that $\langle n,w\rangle\leq 0$ for all $w\in \pi_Q(x)$. It follows directly from the projection property the following useful result.
\begin{lemma}
\label{appendix aux lemma}
    For any $v\in\R^d$ and $x\in Q$, the vector $n_x=v-\mathrm{Proj}_{\pi_Q(x)}v$ is orthogonal to $\mathrm{Proj}_{\pi_Q(x)}v$ and $n_x\in N_Q(x)$. Conversely, if $w\in \pi_Q(x)$ is that $n_x=v-w\in N_Q(x)$ and $\langle w,v-w\rangle = 0$, then $w=\mathrm{Proj}_{\pi_Q(x)}v$.
\end{lemma}

\noindent\textbf{Uniqueness and Stability.} Suppose that $x_1:[0,T]\to Q$ and $x_2:[0,T]\to Q$ are solutions of \eqref{Projected ODE}. Then,
$$
\frac{d}{dt} \frac{1}{2}\|x_1-x_2\|^2 = \langle x_1-x_2,\mathrm{Proj}_{\pi_Q(x_1)}V(x_1)-\mathrm{Proj}_{\pi_Q(x_2)}V(x_2)\rangle.
$$
Using the property of the projection we have
\begin{eqnarray*}
\langle x_1-x_2,\mathrm{Proj}_{\pi_Q(x_1)}V(x_1)-\mathrm{Proj}_{\pi_Q(x_2)}V(x_2)\rangle&\leq& \langle x_1-x_2,V(x_1)-V(x_2)\rangle\\
&\leq& \|\nabla V\|_{\infty} \,\|x_1-x_2\|^2, 
\end{eqnarray*}
where we have used Lemma~\ref{appendix aux lemma} for the first inequality, and the Lipschitz propety for the second inequality. Grownwall's inequality applied to $\|x_1-x_2\|^2$ gives:
$$
\|x_1(t)-x_2(t)\|\leq e^{\|\nabla V\|_{\infty}\,t}\|x_1(0)-x_2(0)\|,
$$
which shows the uniqueness and stability of solutions with respect to the initial condition.

\noindent\textbf{Equivalence with a relaxed problem.} 
Using $N_Q(x)$, we now introduce a relaxed problem which we prove is equivalent to the ODE \eqref{Projected ODE}. For each $x\in Q$ we define the compact convex set $\mathcal{V}(x)\subset\R^d$ by
$$
\mathcal{V}(x) = \{ V(x) - n_x\,|\, n_x\in N_Q(x),\ \ \|n_x\|^2\leq V(x)\cdot n_x\}.
$$
The relaxed problem is finding an absolutely continuous curve $x:[0,T]\to Q$ such that 
\begin{equation}\label{relaxed}
    \begin{cases}
    \dot{x}(t)\in \mathcal{V}(x(t))\\
    x(0)=x_{in},
\end{cases}
\end{equation}
for almost every $t\in [0,T]$. To show the equivalence between \eqref{Projected ODE} and \eqref{relaxed}, we need the following Lemma.
\begin{lemma}\label{lem:2}
    For all $x\in Q$ we have $V(x)\in\mathcal{V}(x)$, $\mathrm{Proj}_{\pi_Q(x)}V(x)\in\mathcal{V}(x)$ and
$$
\mathcal{V}(x)\cap\pi_{Q}(x) = \{ \mathrm{Proj}_{\pi_Q(x)}V(x)\}.
$$
\end{lemma}
\begin{proof}[Proof of Lemma~\ref{lem:2}]
    Taking $n_x=0$ in the definition of $\mathcal{V}(x)$ gives that $V(x)\in \mathcal{V}(x)$. Writing $n_x=V(x)-\mathrm{Proj}_{\pi_Q(x)}V(x)$, we recall from Lemma~\ref{appendix aux lemma} that $n_x\in N_Q(x)$ and 
    $$
    \langle n_x,\mathrm{Proj}_{\pi_Q(x)}V(x)\rangle=0,
    $$
    so $\|n_x\|^2 = \langle n_x,V(x)\rangle$ and we conclude $\mathrm{Proj}_{\pi_Q(x)}V(x)\in\mathcal{V}(x)$. Now note that if $V(x)-n_x\in \pi_Q(x)$ with $n_x\in N_Q(x)$, then $\langle V(x)-n_x,n_x\rangle \leq 0$ with equality only if $V(x)-n_x=\mathrm{Proj}_{\pi_Q(x)}V(x)$, as we have noted above. So if $V(x)-n_x\in\pi_Q(x)\cap\mathcal{V}(x)$ then $V(x)-n_x=\mathrm{Proj}_{\pi_Q(x)}V(x)$. 
\end{proof}

An absolutely continuous curve $x:[0,T]\to Q$ is such that
$$
\lim_{h\to 0^+} \frac{x(t+h)-x(t)-\dot{x}(t)\,h}{h}=0,
$$
for almost every $t$. Since $x(t)\in Q$ for all $t\in [0,T]$,
$$
0=\lim_{h\to 0^+}\frac{d(x(t+h),Q)}{h} = \lim_{h\to 0^+}\frac{d(x(t)+h\dot{x}(t),Q)}{h},
$$
which shows $\dot{x}(t)\in \pi_Q(x(t))$. If we have a solution to the relaxed problem, then the differential inclusion $\dot{x}(t)\in \mathcal{V}(x(t))$ is satisfied almost everywhere, therefore we have $\dot{x}(t) = \mathrm{Proj}_{\pi_Q(x)}V(x)$ since by Lemma~\ref{lem:2} $\mathcal{V}(x)\cap \pi_Q(x)=\{\mathrm{Proj}_{\pi_Q(x)}V(x)\}$, and we conclude that \eqref{relaxed} and \eqref{Projected ODE} are equivalent.

\noindent\textbf{Existence.} Consider $\nu_Q(x^{n+1}_{\Delta t})\in N_Q(x^{n+1}_{\Delta t})$ unit vectors and $0\leq\lambda\leq 1$ such that
$$
x^{n+1}_{\Delta t}=x^{n}_{\Delta t}+\Delta t\, V( x^{n}_{\Delta t})-\Delta t\, \lambda\, (V( x^{n}_{\Delta t})\cdot \nu_Q(x^{n+1}_{\Delta t}))_+ \nu_Q(x^{n+1}_{\Delta t}),
$$
which follows directly from the properties of the projection. For each $n\geq 0$ we consider the discrete velocity
$$
u_{\Delta t}^n = \frac{x^{n+1}_{\Delta t}-x^{n}_{\Delta t}}{\Delta t},
$$
which we re-write as
\begin{eqnarray*}
u_{\Delta t}^n &=& V(x^{n+1}_{\Delta t})- \lambda (V( x^{n+1}_{\Delta t})\cdot \nu_Q(x^{n+1}_{\Delta t}))_+ \nu_Q(x^{n+1}_{\Delta t})+\underbrace{V(x^{n}_{\Delta t})-V(x^{n+1}_{\Delta t})}_{I} \\
&&\qquad + \underbrace{\lambda\, ((V( x^{n}_{\Delta t})\cdot \nu_Q(x^{n+1}_{\Delta t}))_+-(V( x^{n+1}_{\Delta t})\cdot \nu_Q(x^{n+1}_{\Delta t}))_+) \nu_Q(x^{n+1}_{\Delta t})}_{II}.
\end{eqnarray*}
We notice the bounds
$$
|I|,\;|II|\le \|\nabla V\|_{\infty} |x^{n+1}_{\Delta t}-x^n_{\Delta t}|\le \Delta t \|\nabla V\|_{\infty}\|V\|_{\infty}. 
$$
Therefore, letting $B_1$ denote the unit ball centred at the origin,
$$
u_{\Delta t}^n\in \mathcal{V}(x^{n+1}_{\Delta t})+\|\nabla V\|_{\infty}\|V\|_{\infty}\Delta t\, B_1.
$$
Hence, for any $\Delta t>0$ we can conclude that for a.e. $t$
\begin{equation}\label{eq:estimate}
(x_{\Delta t}(t),\dot{x}_{\Delta t}(t))\in \mathrm{Graph}(\mathcal{V})+ \Delta t( \|V\|_{\infty}B_1\times \|\nabla V\|_{\infty}\|V\|_{\infty}  B_1),    
\end{equation}
Noting that $x_{\Delta t}$ is uniformly Lipschitz with constant less that $\|V\|_\infty$, we get up to subsequence there exists a Lipschitz function $X:[0,\infty)\to Q$ such that $x_{\Delta t}\to X$ uniformly at compact subintervals, by Arzerlà-Ascoli. We conclude using Mazur's Lemma that the derivative of $x$ belongs almost everywhere to the upper limit of the convex hull of the values of $\dot{x}_{\Delta t}(t)$,
$$
\dot{x}(t)\in \limsup_{\epsilon \to 0^+} \mathrm{co}({\dot{x}_{\Delta t}(t)}_{0< \Delta t<\epsilon}).
$$
Using that $\mathcal{V}(x(t))$ is convex and closed, we conclude
$$
(x(t),\dot{x}(t))\in \mathrm{Graph}(\mathcal{V}),
$$
which implies that $x$ is a solution to the relaxed problem, and therefore a solution to the original \eqref{Projected ODE}.

\noindent\textbf{Quantitative Estimate.} We differentiate the distance, between $X$ and $x_{\Delta t}$ to obtain 
\begin{eqnarray*}
\frac{1}{2}\frac{d}{dt}|X-x_{\Delta t}|^2&=&\langle X-x_{\Delta t}, \dot{X}-\dot{x}_{\Delta t}\rangle\\
&\le& \langle X-x_{\Delta t}, V(X)-V({x}_{\Delta t})\rangle+ \Delta t \|V\|_{\infty} |\dot{X}-\dot{x}_{\Delta t}|\\
&&\qquad+\Delta t \|V\|_{\infty}\|\nabla V\|_{\infty} |X-x_{\Delta t}|\\
&\le & (1+\|\nabla V\|_{\infty})|X-x_{\Delta t}|^2+2\Delta t \|V\|^2_{\infty}+(\Delta t)^2\|V\|^2_{\infty}\|\nabla V\|^2_{\infty},
\end{eqnarray*}
where we have used estimate \eqref{eq:estimate} and the contraction to property. Using Gromwall's inequality and that $|X-x_{\Delta t}|^2=0$ , we obtain

\begin{equation*}
|X-x_{\Delta t}|^2\le e^{2(1+\|\nabla V\|_{\infty})t} \left(2\Delta t \|V\|^2_{\infty}+(\Delta t)^2\|V\|^2_{\infty}\|\nabla V\|^2_{\infty}\right).    
\end{equation*}
\end{proof}


\bibliography{biblio}

\end{document}